\documentclass[12pt]{article}
\usepackage{fullpage,amsthm,amsmath,amssymb,enumerate}

\newcounter{thmctr}
\newtheorem{thm}[thmctr]{Theorem}
\newtheorem{lemma}[thmctr]{Lemma}

\newtheorem{cor}[thmctr]{Corollary}
\theoremstyle{definition}
\newtheorem*{definition}{Definition}
\theoremstyle{plain}

\newcommand{\uic}{Department of Mathematics, Statisticts, and Computer Science,
University of Illinois at Chicago, 851
S. Morgan Street, Chicago, IL, USA. }

\newcommand{\dhruvfoot}{\footnote{\uic mubayi@math.uic.edu.
 Research supported in part by  NSF Grant 0969092.}}
\newcommand{\johnfoot}{\footnote{\uic lenz@math.uic.edu.}}
\newcommand{\peterfoot}{\footnote{School of Mathematical Sciences, Queen Mary, University of London,
Mile End Road, London E1 4NS, UK. p.keevash@qmul.ac.uk. Research 
supported in part by ERC grant 239696 and EPSRC grant EP/G056730/1.}}

\title{Spectral extremal problems for hypergraphs}
\author{Peter Keevash \peterfoot \and John Lenz \johnfoot \and Dhruv Mubayi \dhruvfoot}

\begin{document}
\maketitle

\begin{abstract}
In this paper we consider spectral extremal problems for hypergraphs.
We give two general criteria under which such results may be deduced
from `strong stability' forms of the corresponding (pure) extremal results.
These results hold for the $\alpha$-spectral radius defined using
the $\alpha$-norm for any $\alpha>1$; the usual spectrum is the case $\alpha=2$.

Our results imply that any hypergraph Tur\'{a}n problem which has the 
stability property and whose extremal construction satisfies some rather 
mild continuity assumptions admits a corresponding spectral result.  A 
particular example is to determine the maximum $\alpha$-spectral radius of any 
$3$-uniform hypergraph on $n$ vertices not containing the Fano plane,
when $n$ is sufficiently large. Another is to determine the maximum $\alpha$-spectral 
radius of any graph on $n$ vertices not containing some fixed colour-critical graph,
when $n$ is sufficiently large;
this generalises a theorem of Nikiforov who proved stronger results in the case $\alpha=2$. We also obtain an $\alpha$-spectral version of 
the Erd\H{o}s-Ko-Rado theorem on $t$-intersecting $k$-uniform hypergraphs.
\end{abstract}

\section{Introduction} 
\label{sec:Introduction}

Let $\mathcal{F}$ be a family of $k$-uniform hypergraphs. 
The {\em Tur\'an number} $\text{ex}(n,\mathcal{F})$ is the maximum number
of edges in a $k$-uniform hypergraph on $n$ vertices, that is {\em $\mathcal{F}$-free}, 
in that it does not have a (not necessarily induced) subgraph isomorphic to any $F \in \mathcal{F}$.  
It is a long-standing open problem in Extremal Combinatorics to develop some understanding of 
these numbers for general hypergraphs.
For ordinary graphs ($k=2$) the picture is fairly complete, but for $k \ge 3$ there are very few
known results. Tur\'an \cite{T61} posed the natural question of determining $\text{ex}(n,F)$ when
$F=K^k_t$ is a complete $k$-uniform hypergraph on $t$ vertices. 
An asymptotic solution to a (non-degenerate)
Tur\'an problem is equivalent to determining the {\em Tur\'an density} 
$\pi(\mathcal{F}) = \lim_{n \to \infty} \binom{n}{k}^{-1} \text{ex}(n,\mathcal{F})$. 
It is still an open problem to determine any value of $\pi(K^k_t)$ with $t>k>2$.
For a summary of progress on hypergraph Tur\'an problems before
2011 we refer the reader to the survey \cite{K}.

In this paper we consider spectral analogues of Tur\'an-type problems for hypergraphs. For graphs,
the picture is again fairly complete, due in large part to a longstanding project of Nikiforov. For
example, he generalised the classical theorem of Tur\'an \cite{T41}, by determining the maximum
spectral radius of any $K_{r+1}$-free graph $G$ on $n$ vertices. Here, the \emph{spectral radius}
$\lambda(G)$ of a graph $G$ is the maximum eigenvalue of its adjacency matrix. Let $T_{r,n}$ denote
the $r$-partite \emph{Tur\'an graph}, i.e.\ the complete $r$-partite graph on $n$ vertices that is
\emph{balanced}, in that its part sizes are as equal as possible. Nikiforov \cite{N2} showed that
$\lambda(G) \le \lambda(T_{r,n})$, with equality only if $G=T_{r,n}$. This generalises Tur\'an's
theorem, as the spectral radius of a graph is always at least its average degree. For many other
spectral analogues of results in extremal graph theory we refer the reader to the survey of
Nikiforov \cite{N}.

\subsection{Definitions}

We adopt the following definition of hypergraph eigenvalues introduced by Friedman and Wigderson
\cite{Fr,FW}. This is based on the extremal characterisation of the spectral radius of a graph,
namely that $\lambda(G) = \max_{\|x\|=1} 2\sum_{ij \in E(G)} x_i x_j$. First we define the
corresponding multilinear form for hypergraphs.

\begin{definition} 
  Let $H$ be a $k$-uniform hypergraph.  The
  \emph{adjacency map of $H$} is the symmetric $k$-linear map $\tau_H : W^k \rightarrow \mathbb{R}$
  defined as follows, where $W$ is the vector space over $\mathbb{R}$ of dimension $|V(H)|$.  First,
  for all $v_1, \dots, v_k \in V(H)$, let
  \begin{align*}
    \tau_H(e_{v_1}, \dots, e_{v_k}) = 
    \begin{cases}
      1 & \left\{ v_1, \ldots, v_k \right\} \in E(H), \\
      0 & \text{otherwise},
    \end{cases}
  \end{align*}
  where $e_v$ denotes the indicator vector of the vertex $v$, that is the vector which has a one in
  coordinate $v$ and zero in all other coordinates.  This defines the value of $\tau_H$ when the
  inputs are standard basis vectors of $W$, then we extend $\tau_H$ to all the domain linearly.  Alternatively, one can directly define the adjacency map by
  $$\tau_H(x_1, \ldots, x_n)=k! \sum_{\{i_1, \ldots, i_k\} \in E(H)} x_{i_1}\cdots x_{i_k}.$$
\end{definition}

Now we can define the spectral radius of a hypergraph. In fact, our results will hold for the
following more general parameter when $\alpha>1$: the definition from \cite{FW} is obtained by
setting $\alpha=2$.

\begin{definition}
  Let $H$ be a $k$-uniform hypergraph and let $\tau_H$ be the adjacency map of $H$. 
  For $\alpha \in \mathbb{R}$, the \emph{$\alpha$-spectral radius of $H$} is
  \[ \lambda_{\alpha}(H) = \max_{x: \|x\|_{\alpha} = 1} \tau_H(x,\dots,x).\]
\end{definition}

To motivate our first result, we remark that many Tur\'an problems exhibit the `stability'
phenomenon, namely that $\mathcal{F}$-free $k$-uniform hypergraphs of nearly maximal size must also be near to
an extremal example. The classical result of this type is the Erd\H{o}s-Simonovits Stability Theorem
(see \cite{Sim}), which states that any $K_{r+1}$-free graph $G$ on $n$ vertices with $e(G) =
e(T_{r,n}) + o(n^2)$ differs by $o(n^2)$ edges from $T_{r,n}$. A closely related property, known as
`strong stability', is exemplified by a result of Andr\'asfai, Erd\H{o}s and S\'os \cite{AES},
that any $K_{r+1}$-free graph $G$ on $n$ vertices with minimum degree $\delta(G) > \frac{3r-4}{3r-1}
n$ must be $r$-partite. Stability is an important phenomenon for hypergraph Tur\'an problems, as in
several cases, the only known proof uses the `stability method', which is first to prove the
stability version, and then to refine this to obtain the exact result.

In the following definition we formalise a generalised form of strong stability.
First we introduce some more notation. Let $H$ be a $k$-uniform hypergraph.
For each $0 \le s \le k-1$ we define the \emph{minimum $s$-degree} $\delta_s(H)$
as the minimum over all sets $S$ of $s$ vertices of the number of edges containing $S$.
We define the \emph{generalised Tur\'an number} $\mbox{ex}_s(n,\mathcal{F})$ as the largest value
of $\delta_s(H)$ attained by an $\mathcal{F}$-free $k$-uniform hypergraph $H$ on $n$ vertices.
Note that $\delta_0(H)=e(H)$, 
so $\mbox{ex}_0(n,\mathcal{F})=\mbox{ex}(n,\mathcal{F})$ is the usual Tur\'an number.

\begin{definition}
  Let $\mathcal{F}$ be a family of $k$-uniform hypergraphs, $n \geq 1$, $0 \le s \le k-1$ and $c > 0$.  
  We say that a family $\mathcal{G}$ of $k$-uniform, $\mathcal{F}$-free hypergraphs is
  \emph{$(\mathcal{F},n,s,c)$-universal} if for any $k$-uniform, $n$-vertex, 
  $\mathcal{F}$-free hypergraph $H$  with $\delta_s(H) > c\ \text{ex}_s(n,\mathcal{F})$ 
  there exists $G \in \mathcal{G}$ such that $H \subseteq G$.
\end{definition}

For example, if $k = 2$, $\mathcal{F} = \{ K_3\}$, $n$ is an even integer at least two, $s = 1$, $c
= \frac{4}{5}$, and $\mathcal{G}$ is the family of $n$-vertex complete bipartite graphs, then $\mathcal{G}$
is $(\mathcal{F},n,s,c)$-universal.  Indeed, $ex_1(n,K_3) = \frac{n}{2}$ and if $H$ is any
$n$-vertex, triangle-free  graph with $\delta(H) > \frac{4}{5} ex_1(n,K_3) = \frac{2n}{5}$, then by
a result of Andr\'{a}sfai, Erd\H{o}s, and S\'{o}s~\cite{AES}, $H$ must be bipartite so there exists $G$ with $H \subset G \in
\mathcal{G}$.

\subsection{Results}

Our first main result gives a general condition under which we can obtain an $\alpha$-spectral analogue of
a hypergraph Tur\'an result: we require an estimate on the difference of successive Tur\'an numbers 
and a sequence of universal families in which the
$\alpha$-spectral radius is close to what one would expect under the uniform weighting of vertices.
If $\alpha > 1$ and $\mathcal{G}$ is a family of $k$-uniform hypergraphs, we define
\[\lambda_{\alpha}(\mathcal{G}) = \sup \{ \lambda_{\alpha}(G) : G \in \mathcal{G}\}.\]

\begin{thm} \label{thm:density}
  Let $N \ge k \geq 2$, $\alpha > 1$, $\epsilon>0$
  and $\mathcal{F}$ be a family of $k$-uniform hypergraphs with $\pi(\mathcal{F})>0$.
  There exist $\delta > 0$ and $n_0 > N$ such that the following holds.

  Suppose that for all $n \ge N$ we have
  \begin{align} \label{eq:density-ex-smooth} 
     \left| \text{ex}(n,\mathcal{F}) - \text{ex}(n-1,\mathcal{F})
     - \pi(\mathcal{F}) \binom{n}{k-1} \right| < \delta n^{k-1}
  \end{align}
  and an $(\mathcal{F},n,1,1-\epsilon)$-universal family $\mathcal{G}_n$ such that
  \begin{align} \label{eq:density-lambda-good}
      \left| \lambda_{\alpha}(\mathcal{G}_n) 
        - k! \text{ex}(n,\mathcal{F}) n^{-k/\alpha} \right| 
      \leq \delta n^{k - k/\alpha - 1}.
  \end{align}
  
  Then for any $\mathcal{F}$-free $k$-uniform hypergraph $H$
  on $n \geq n_0$ vertices we have
  \begin{align*}
    \lambda_{\alpha}(H) \leq \lambda_{\alpha}(\mathcal{G}_n).
  \end{align*}
  In addition, if equality holds then $H \in \mathcal{G}_n$.
\end{thm}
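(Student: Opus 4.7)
The proof proceeds by induction on $n$, with the base case handled by taking $n_0$ sufficiently large. Given an $\mathcal{F}$-free hypergraph $H$ on $n$ vertices, set $\lambda := \lambda_\alpha(H)$ and let $x \ge 0$ with $\|x\|_\alpha = 1$ achieve the maximum, so $\tau_H(x, \ldots, x) = \lambda$. Lagrange multipliers yield the eigenequation
\begin{align*}
\tau_H(x, \ldots, x, e_v) = \lambda \, x_v^{\alpha - 1}, \qquad v \in V(H).
\end{align*}
Splitting the sum defining $\tau_H(x, \ldots, x)$ according to whether an edge contains a fixed vertex $v$, and using the eigenequation on the contribution from edges through $v$, gives $\tau_{H - v}(x', \ldots, x') = \lambda (1 - k x_v^\alpha)$ where $x' = x|_{V(H) \setminus \{v\}}$. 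Renormalising $x'$ to unit $\alpha$-norm yields the key reduction
\begin{align*}
\lambda_\alpha(H - v) \ge \lambda \cdot \frac{1 - k x_v^\alpha}{(1 - x_v^\alpha)^{k/\alpha}}.
\end{align*}

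\textbf{Main dichotomy.} Fix a threshold $\eta = \eta(\epsilon) > 0$ to be chosen. Let $v_* \in V(H)$ satisfy $x_{v_*} = \min_v x_v$, so $x_{v_*}^\alpha \le 1/n$. Applying the inductive hypothesis to the $\mathcal{F}$-free hypergraph $H - v_*$ together with the reduction gives $\lambda \le \lambda_\alpha(\mathcal{G}_{n-1}) \cdot g(x_{v_*}^\alpha)$, where $g(t) := (1-t)^{k/\alpha}(1-kt)^{-1}$. A direct computation of
\begin{align*}
(\log g)'(t) = \frac{k \, [(\alpha - 1) + t(k - \alpha)]}{\alpha (1 - t)(1 - kt)}
\end{align*}
shows $g$ is strictly increasing on $[0, 1/n]$ for any $\alpha > 1$. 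Combining a Taylor expansion of $g$ at $0$ with the smoothness hypothesis \eqref{eq:density-ex-smooth} and the normalization \eqref{eq:density-lambda-good} gives the crucial asymptotic match
\begin{align*}
\lambda_\alpha(\mathcal{G}_{n-1}) \cdot g(1/n) = \lambda_\alpha(\mathcal{G}_n) + O(\delta \, n^{k - k/\alpha - 1}).
\end{align*}
Hence if $x_{v_*}^\alpha \le (1 - \eta)/n$, the strict monotonicity of $g$ produces a gap of order $\eta/n$ which, provided $\delta$ is small compared to $\eta$, forces $\lambda < \lambda_\alpha(\mathcal{G}_n)$.

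\textbf{Near-uniform case and conclusion.} In the remaining case $x_v^\alpha \ge (1 - \eta)/n$ for every $v$, a further application of the eigenequation at the maximum vertex (bounding its right-hand side using the near-uniform-from-below structure of the other entries) also pins $x_{\max}$ at order $n^{-1/\alpha}$, so $x$ is approximately uniform. Then the eigenequation $\lambda \, x_v^{\alpha - 1} \le (k-1)! \, d(v) \, x_{\max}^{k-1}$, the lower bound $\lambda \ge \lambda_\alpha(\mathcal{G}_n) \approx k! \, \text{ex}(n, \mathcal{F}) \, n^{-k/\alpha}$, and the handshake bound $\text{ex}_1(n, \mathcal{F}) \le k \, \text{ex}(n, \mathcal{F})/n$ together yield $d(v) > (1 - \epsilon) \text{ex}_1(n, \mathcal{F})$ for every $v$, once $\eta$ is sufficiently small in terms of $\epsilon$. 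By the $(\mathcal{F}, n, 1, 1 - \epsilon)$-universality of $\mathcal{G}_n$ there exists $G \in \mathcal{G}_n$ with $H \subseteq G$, whence $\lambda_\alpha(H) \le \lambda_\alpha(G) \le \lambda_\alpha(\mathcal{G}_n)$. In the equality case, the strict positivity of the near-uniform $x$ gives $\tau_H(x, \ldots, x) < \tau_G(x, \ldots, x)$ whenever $H \subsetneq G$, forcing $H = G \in \mathcal{G}_n$. The main obstacle is the coupled calibration: $\eta$ must be small in terms of $\epsilon$ to trigger strong stability and to bootstrap the upper bound on $x_{\max}$, while $\delta$ must be small in terms of $\eta$ so that the $\eta/n$ gain in the dichotomy dominates the accumulated error from \eqref{eq:density-ex-smooth} and \eqref{eq:density-lambda-good}.
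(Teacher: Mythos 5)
Your proposal correctly identifies the key technical ingredients that the paper also uses: the Lagrange-multiplier eigenequation (the paper's Lemma \ref{lem:neighborsum}), the vertex-deletion reduction together with the renormalisation factor $g(t) = (1-t)^{k/\alpha}(1-kt)^{-1}$ (the paper's Lemma \ref{lem:deletevertex}), and the dichotomy between ``some coordinate is small'' and ``weights are near-uniform, hence all degrees are large''. Your asymptotic match $\lambda_\alpha(\mathcal{G}_{n-1}) g(1/n) = \lambda_\alpha(\mathcal{G}_n) + O(\delta n^{k-k/\alpha-1})$ is the same calculation as the paper's estimate \eqref{eq:mudiff}. However, there are two substantive issues.

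\textbf{The base case of the induction is a genuine gap.} You propose to argue by induction on $n$, invoking the theorem for $H - v_*$ on $n-1$ vertices whenever a low-weight coordinate exists. But at $n = n_0$ the small-weight branch has nothing to fall back on: there is no inductive hypothesis for $n_0 - 1$, and the only unconditional bound available, Lemma \ref{lem:edgebound}, gives $\lambda_\alpha(H - v_*) \le (k!\,\mathrm{ex}(n-1,\mathcal{F}))^{1-1/\alpha}$, which exceeds $\lambda_\alpha(\mathcal{G}_{n-1})$ by the constant factor $\pi(\mathcal{F})^{-1/\alpha}$ and hence cannot close the argument. ``Taking $n_0$ sufficiently large'' does not repair this: the obstruction is structural, not quantitative. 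The paper avoids any base case by running a \emph{descent}: it tracks the excess $C_i = \lambda_\alpha(H_i) - \mu_i$, proves $C_i$ strictly increases by $\delta (i+1)^{k(\alpha-1)/\alpha -1}$ at each deletion, and derives a contradiction if the descent reaches $N$ because the accumulated excess eventually overwhelms the trivial bound $\lambda_\alpha(H_N) \le N^k$. This does not require the theorem to hold for smaller $n$ at all. Your induction scheme, when the base case is worked through honestly, either fails or effectively reduces to this descent; so the cleaner framing is to drop the induction altogether.

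\textbf{The near-uniform step needs the refined H\"older bound.} In the case $x_v^\alpha \ge (1-\eta)/n$ for all $v$, you write the crude bound $\lambda x_v^{\alpha-1} \le (k-1)!\,d(v)\,x_{\max}^{k-1}$. Knowing only $x_v^\alpha \ge (1-\eta)/n$ for all $v$ forces $x_{\max}^\alpha \le \eta + 1/n$, which is a constant, not $O(1/n)$. Pinning $x_{\max} = (1+O(\eta))n^{-1/\alpha}$ requires the eigenequation \emph{plus} the observation that $\sum_{S\in L_o(u)}w(S)^\alpha = 1 - \sum_{S\notin L_o(u)}w(S)^\alpha \le 1 - |\overline{L_o(u)}|\,((1-\eta)/n)^{k-1}$, combined with H\"older; the paper's Lemma \ref{lem:smalldegree} works exactly this way. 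Your parenthetical alludes to this, but the displayed inequality as written has a constant-factor loss of $\pi(\mathcal{F})^{-(k-1)/(\alpha-1)}$ that no choice of $\eta$ small in terms of $\epsilon$ can repair when $\pi(\mathcal{F})$ is small or $\alpha$ is near $1$; the refined H\"older estimate is essential, not optional.
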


As example applications of Theorem \ref{thm:density}, 
we will give $\alpha$-spectral Tur\'an results 
for colour-critical graphs and for the Fano plane. 
We say that a graph $F$ is \emph{colour-critical} if there is an edge $e$ of $F$ such that
$\chi(F-e) < \chi(F)$, where $\chi$ denotes the chromatic number.
Let $F$ be a colour-critical graph with $\chi(H)=r+1$.
Simonovits \cite{Sim} showed that there is $n_0$, such that for any $F$-free graph $G$ on $n>n_0$
vertices, $e(G) \le e(T_{r,n})$, with equality only if $G=T_{r,n}$. 
Nikiforov \cite{N5} extended this by showing that there is $n_0$, such that if $G$ has
 $n>n_0$ vertices and $\lambda_2(G) > \lambda_2(T_{r,n})$, then $G$ contains a copy of the complete $r$-partite graph with parts of size $\Omega(\log n)$ plus an extra edge (actually, in his result one of the parts can even be much larger). Our first result proves an $\alpha$-spectral version of the extremal result that guarantees a copy of $F$.

\begin{cor} \label{cor:colourcritical}
  Let $F$ be a colour-critical graph with $\chi(F)=r+1$.
  For any $\alpha > 1$, there is $n_0$, such that for any $F$-free graph $G$ on
  $n>n_0$ vertices, $\lambda_{\alpha}(G) \leq \lambda_{\alpha}(T_{r,n})$, 
  with equality if and only if $G=T_{r,n}$.
\end{cor}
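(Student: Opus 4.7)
The plan is to deduce Corollary~\ref{cor:colourcritical} from Theorem~\ref{thm:density}, applied with $k=2$, $\mathcal{F}=\{F\}$, and $\mathcal{G}_n$ the family of all complete $r$-partite graphs on $n$ vertices. By Simonovits's theorem, $\mathrm{ex}(n,F)=e(T_{r,n})$ for $n$ sufficiently large, so $\pi(F)=1-1/r$, and the smoothness condition~(\ref{eq:density-ex-smooth}) is immediate: the difference $e(T_{r,n})-e(T_{r,n-1})$ is the degree of a smallest-part vertex of $T_{r,n}$, which differs from $\pi(F)\,n=(1-1/r)n$ by only a bounded constant.

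To verify the universal-family hypothesis, I would invoke the strong-stability form of the Erd\H{o}s-Simonovits theorem for colour-critical graphs: there exists $\eta>0$ such that, for $n$ large, every $F$-free graph on $n$ vertices with minimum degree greater than $(1-1/r-\eta)n$ is $r$-chromatic, and is therefore contained in some complete $r$-partite graph on $n$ vertices. Since $\mathrm{ex}_1(n,F)=(1-1/r)n+o(n)$, choosing $\epsilon$ sufficiently small makes $\mathcal{G}_n$ an $(\mathcal{F},n,1,1-\epsilon)$-universal family. For the spectral condition~(\ref{eq:density-lambda-good}) I would use a Lagrange-multiplier argument together with the symmetry of complete $r$-partite graphs to show that the optimising vector in the definition of $\lambda_{\alpha}(G)$ is constant on each part; this reduces $\lambda_{\alpha}$ on $\mathcal{G}_n$ to a concave optimisation in $r$ variables, maximised up to lower-order terms at the most balanced partition, i.e.\ at $G=T_{r,n}$, and the resulting value matches $2e(T_{r,n})n^{-2/\alpha}$ within the required error.

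With the hypotheses in place, Theorem~\ref{thm:density} immediately yields $\lambda_{\alpha}(H)\le\lambda_{\alpha}(T_{r,n})$ for every $F$-free $H$ on $n>n_0$ vertices, and forces $H\in\mathcal{G}_n$ in the equality case; to pin down $H=T_{r,n}$, I would finish with a short swap argument showing that if $G\in\mathcal{G}_n$ has two parts whose sizes differ by more than one, then moving a vertex to the smaller part strictly increases $\lambda_{\alpha}$. The main obstacle is this strict-uniqueness step at $\alpha\neq 2$, where the Perron-Frobenius argument that handles $\alpha=2$ is unavailable; one must instead verify strict concavity of the reduced objective on pairs of parts via a second-variation calculation on the $\alpha$-norm unit sphere, which takes some care but is elementary.
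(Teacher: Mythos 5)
Your overall architecture matches the paper's exactly: apply Theorem~\ref{thm:density} with $\mathcal F=\{F\}$ and $\mathcal G_n$ the family of complete $r$-partite graphs on $n$ vertices, verify the smoothness of $\mathrm{ex}(n,F)$ from Simonovits's exact result, obtain universality from the Andr\'asfai--Erd\H{o}s--S\'os-type strong stability for colour-critical graphs, and estimate $\lambda_\alpha(T_{r,n})$ using the symmetry reduction (the paper packages this as Lemma~\ref{lem:radii}(iii), derived from Corollary~\ref{cor:sym}). The small slip that $e(T_{r,n})-e(T_{r,n-1})$ is the degree of a \emph{largest}-part vertex (not smallest-part) is harmless since the gap to $(1-1/r)n$ is $O(1)$ either way.

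The genuine gap is the step you yourself flag as the ``main obstacle.'' Theorem~\ref{thm:density} outputs only $\lambda_\alpha(H)\le\lambda_\alpha(\mathcal G_n)=\sup_{G\in\mathcal G_n}\lambda_\alpha(G)$; to replace the right-hand side by $\lambda_\alpha(T_{r,n})$ and get uniqueness of the extremiser you need the exact statement that $T_{r,n}$ maximises $\lambda_\alpha$ among all complete $r$-partite graphs on $n$ vertices. Your reduction ``maximised up to lower-order terms at the most balanced partition'' verifies condition~\eqref{eq:density-lambda-good} but is not enough to convert $\lambda_\alpha(\mathcal G_n)$ into $\lambda_\alpha(T_{r,n})$, and the proposed swap/concavity argument is asserted rather than carried out. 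The paper explicitly calls this fact ``surprisingly not entirely trivial'' and cites Kang--Nikiforov--Yuan~\cite{KNY}, which is a dedicated paper establishing precisely this (for general $\alpha>1$ and in fact for $k$-partite $k$-uniform hypergraphs). You also mis-order the logic: you write that Theorem~\ref{thm:density} ``immediately yields $\lambda_\alpha(H)\le\lambda_\alpha(T_{r,n})$'' and that the swap argument is only for pinning down $H=T_{r,n}$, but in fact the swap/[KNY] fact must be established \emph{before} the bound $\lambda_\alpha(H)\le\lambda_\alpha(T_{r,n})$ can be extracted from the theorem at all. As written the proposal leaves the central nontrivial ingredient unproved.
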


The Fano plane is the $3$-uniform hypergraph where the vertices are 
the non-zero vectors in $\mathbb{F}_2^3$ and the edges are all triples $xyz$ such that 
$x+y=z$.  Let $B_n$ denote the balanced complete bipartite $3$-uniform
hypergraph on $n$ vertices: there are two parts of sizes $\lfloor n/2 \rfloor$ and $\lceil n/2
\rceil$, and the edges consist of all triples that intersect both parts. It was conjectured by S\'os
\cite{Sos}, then proved independently by Keevash and Sudakov \cite{KS} and by F\"uredi and
Simonovits \cite{FS}, that $B_n$ is the unique largest Fano-free $3$-uniform hypergraph on $n$
vertices for large $n$. The following is an $\alpha$-spectral generalisation of this result.

\begin{cor} \label{cor:fano}
  For any $\alpha > 1$, there is $n_0$, such that for any Fano-free $3$-uniform hypergraph $H$ on
  $n>n_0$ vertices, $\lambda_{\alpha}(H) \leq \lambda_{\alpha}(B_n)$, 
  with equality if and only if $H=B_n$.
\end{cor}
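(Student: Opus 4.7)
The plan is to apply Theorem~\ref{thm:density} with $k=3$, $\mathcal{F}=\{F\}$ the Fano plane, and the singleton family $\mathcal{G}_n=\{B_n\}$; the conclusion $\lambda_\alpha(H)\le\lambda_\alpha(B_n)$ with equality only if $H\in\mathcal{G}_n$ is then exactly the corollary. Three conditions must be verified: the Tur\'an smoothness~\eqref{eq:density-ex-smooth}, the spectral estimate~\eqref{eq:density-lambda-good} for $\{B_n\}$, and the $(\{F\},n,1,1-\epsilon)$-universality of $\{B_n\}$ for some $\epsilon>0$ depending on~$\alpha$.

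The first two conditions are essentially calculations. By the exact Fano Tur\'an theorem of Keevash-Sudakov~\cite{KS} and F\"uredi-Simonovits~\cite{FS}, $\text{ex}(n,F)=e(B_n)$ for large $n$, and expanding $e(B_n)=\binom{n}{3}-\binom{\lfloor n/2\rfloor}{3}-\binom{\lceil n/2\rceil}{3}$ shows that $\pi(F)=3/4$ and that $e(B_n)-e(B_{n-1})$ agrees with $\tfrac34\binom{n}{2}$ up to an $O(n)$ error, verifying~\eqref{eq:density-ex-smooth}. For the spectral estimate, the uniform vector $x_v=n^{-1/\alpha}$ gives $\lambda_\alpha(B_n)\ge 6\,e(B_n)\,n^{-3/\alpha}$. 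For the matching upper bound, the $S_{m_1}\times S_{m_2}$ symmetry of $B_n$ reduces the maximization to two variables $a,b$ (the constant values on each part) subject to $m_1 a^\alpha+m_2 b^\alpha=1$. When $n$ is even, AM--GM together with the concavity of $t\mapsto t^{1/\alpha}$ forces the maximum at $a=b$, giving equality; when $n$ is odd, a perturbative analysis around the uniform vector shows the deviation is $O(n^{1-3/\alpha})$, well within the required tolerance $\delta n^{2-3/\alpha}$.

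The main obstacle is the universality condition: we must produce $\epsilon>0$ such that every Fano-free $3$-graph $H$ on $n$ vertices with $\delta_1(H)>(1-\epsilon)\,\text{ex}_1(n,F)$ is contained in $B_n$. First observe that $\text{ex}_1(n,F)=\delta_1(B_n)=(3/8+o(1))n^2$: the lower bound is realized by $B_n$, and the upper bound follows from the edge Tur\'an result via the averaging inequality $\delta_1(H)\le 3e(H)/n$. The hypothesis on $\delta_1$ therefore forces $e(H)\ge\text{ex}(n,F)-o(n^3)$, so the edge-stability form of the Fano result (established in the course of the proofs in \cite{KS,FS}) yields a partition $V(H)=A\cup B$ with $|A|=\lfloor n/2\rfloor$ such that all but $o(n^3)$ edges of $H$ are of $B_n$-type with respect to $(A,B)$. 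A short bootstrap using the minimum-degree hypothesis — rebalancing the cut if necessary and verifying that every vertex has its full $B_n$-neighbourhood, which is essentially the refinement step of \cite{KS,FS} recast as a strong-stability statement — upgrades this to the embedding $H\subseteq B_n$. Once all three conditions are verified, Theorem~\ref{thm:density} delivers the corollary.
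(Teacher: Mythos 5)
The calculations in your first two paragraphs are fine — the smoothness estimate, the value $\pi(F)=3/4$, and the spectral asymptotics for $B_n$ all check out and match what the paper does. But the universality step, where you take $\mathcal{G}_n=\{B_n\}$ and claim that every Fano-free $H$ with $\delta_1(H)>(1-\epsilon)\,\mathrm{ex}_1(n,F)$ is a subhypergraph of $B_n$, contains a genuine gap, and in fact the claim is false for every fixed $\epsilon>0$. Consider the complete bipartite $3$-graph $B_{n,m}$ with parts of sizes $m$ and $n-m$, $m$ close to but not equal to $\lfloor n/2\rfloor$ (even $m=\lfloor n/2\rfloor-1$ suffices). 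This hypergraph is Fano-free, and a direct computation shows $\delta_1(B_{n,m})=\binom{n-1}{2}-\binom{n-m-1}{2}=(1-O(1/n))\,\delta_1(B_n)$, so for large $n$ it satisfies $\delta_1>(1-\epsilon)\,\mathrm{ex}_1(n,F)$. Yet $B_{n,m}\not\subseteq B_n$: a complete bipartite $3$-graph with both parts of size at least two has a unique bipartition into parts hit by every edge, and the unique bipartition of $B_{n,m}$ has the wrong part sizes for $B_n$. So the singleton family $\{B_n\}$ is not $(\{F\},n,1,1-\epsilon)$-universal. Your inference that the degree hypothesis forces $e(H)\ge\mathrm{ex}(n,F)-o(n^3)$ is also off: since $\epsilon$ is a fixed constant it only gives $e(H)\ge(1-\epsilon)\,\mathrm{ex}(n,F)$, a deficit of $\Omega(n^3)$, so the edge-stability argument and the ``rebalancing bootstrap'' cannot be made to yield a balanced $2$-colouring.

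The paper's proof avoids this by taking $\mathcal{G}_n$ to be the family of \emph{all} $2$-colourable $3$-uniform hypergraphs on $n$ vertices. This family genuinely is $(\{F\},n,1,1-\epsilon)$-universal by the strong stability theorem of F\"uredi and Simonovits \cite{FS}. The price paid is that one must then separately verify $\lambda_{\alpha}(\mathcal{G}_n)=\lambda_{\alpha}(B_n)$, i.e.\ that $B_n$ maximises the $\alpha$-spectral radius over all $2$-colourable $3$-graphs on $n$ vertices; this is not automatic and is the content of the results cited from \cite{MT} (for $\alpha=2$) and \cite{KNY} (for general $\alpha>1$). Your proposal implicitly needs this nontrivial fact too (hidden in ``the maximization reduces to two variables''), but you only apply the symmetry reduction to $B_n$ itself, not to an arbitrary unbalanced $2$-colourable hypergraph. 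So the missing ingredient is precisely the extremality of $B_n$ within the universal family, which must be imported from \cite{KNY} (or proved directly), and the universal family must be enlarged from $\{B_n\}$ to all $2$-colourable $3$-graphs for Theorem~\ref{thm:density} to apply.
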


Our second result gives another general condition under which we can obtain 
an $\alpha$-spectral analogue of a hypergraph Tur\'an result,
namely if the universality parameter $c$ is sufficiently small.

\begin{thm} \label{thm:strongstab}
  Let $n \ge k \geq 2$, $\alpha > 1$, $c>0$ and $\mathcal{F}$ be a family of $k$-uniform hypergraphs.
  Suppose $\mathcal{G}_n$ is an $(\mathcal{F},n,0,c)$-universal family 
  of $k$-uniform hypergraphs on $n$ vertices such that
  \begin{align*}
      c < \frac{\lambda_{\alpha}(\mathcal{G}_n)^{\alpha/(\alpha-1)}}{k!\text{ex}(n,\mathcal{F})}.
  \end{align*}
  If $H$ is a $k$-uniform $\mathcal{F}$-free hypergraph on $n$ vertices then
  \begin{align*}
    \lambda_{\alpha}(H) \leq \lambda_{\alpha}(\mathcal{G}_n).
  \end{align*}
  Furthermore, if $\lambda_{\alpha}(H) > (ck!\text{ex}(n,\mathcal{F}))^{(\alpha-1)/\alpha}$
  then $H \subseteq G$ for some $G \in \mathcal{G}_n$.
\end{thm}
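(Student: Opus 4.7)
The plan is to first establish a general spectral-versus-edge-count bound
\[
  \lambda_{\alpha}(H) \le (k!\, e(H))^{(\alpha-1)/\alpha},
\]
valid for every $k$-uniform hypergraph $H$, and then to split an $\mathcal{F}$-free $H$ into two cases depending on whether $e(H)$ lies above or below $c\,\text{ex}(n,\mathcal{F})$.

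To derive the bound, I would take a nonnegative optimizer $x$ with $\|x\|_{\alpha} = 1$ and apply H\"older's inequality with exponents $\alpha/(\alpha-1)$ and $\alpha$ to the sum $\sum_{e \in E(H)} 1 \cdot \prod_{i \in e} x_i$, yielding
\[
  \sum_{e \in E(H)} \prod_{i \in e} x_i \le e(H)^{(\alpha-1)/\alpha}\Bigl(\sum_{e \in E(H)} \prod_{i \in e} x_i^{\alpha}\Bigr)^{1/\alpha}.
\]
Setting $y_i = x_i^{\alpha}$, the inner sum is at most the elementary symmetric polynomial $e_k(y_1,\dots,y_n)$, and $k!\, e_k(y) \le (\sum_i y_i)^k = 1$ because $k!\, e_k(y)$ equals the sum over ordered $k$-tuples of \emph{distinct} indices of $y_{i_1}\cdots y_{i_k}$, while $(\sum_i y_i)^k$ is the corresponding sum over all ordered $k$-tuples. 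Multiplying through by $k!$ produces the displayed bound.

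Armed with this inequality, the theorem follows by dichotomy on $e(H)$. If $e(H) \le c\,\text{ex}(n,\mathcal{F})$, the key bound gives $\lambda_{\alpha}(H) \le (c\, k!\,\text{ex}(n,\mathcal{F}))^{(\alpha-1)/\alpha} < \lambda_{\alpha}(\mathcal{G}_n)$, which is exactly the hypothesis $c < \lambda_{\alpha}(\mathcal{G}_n)^{\alpha/(\alpha-1)}/(k!\,\text{ex}(n,\mathcal{F}))$ rearranged. Otherwise $\delta_0(H) = e(H) > c\,\text{ex}(n,\mathcal{F})$, and $(\mathcal{F},n,0,c)$-universality yields a $G \in \mathcal{G}_n$ with $H \subseteq G$; since $\tau_H(x,\dots,x)$ is a sum of nonnegative monomials in the nonnegative variables and hence monotone under edge addition, $\lambda_{\alpha}(H) \le \lambda_{\alpha}(G) \le \lambda_{\alpha}(\mathcal{G}_n)$. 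The ``furthermore'' clause is the contrapositive combined with universality: if $\lambda_{\alpha}(H) > (c\, k!\,\text{ex}(n,\mathcal{F}))^{(\alpha-1)/\alpha}$, the key bound forces $e(H) > c\,\text{ex}(n,\mathcal{F})$, and universality embeds $H$ in some $G \in \mathcal{G}_n$.

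The main obstacle I anticipate is locating the right form of the H\"older estimate so that the exponent $(\alpha-1)/\alpha$ falls out exactly as in the hypothesis on $c$. The trick of writing $\sum_e \prod_i x_i = \sum_e 1 \cdot \prod_i x_i$ and bounding the ``$\prod x_i^{\alpha}$'' piece by an elementary symmetric polynomial (rather than by cruder degree-based estimates, which would yield an $e(H)$ factor instead of $e(H)^{(\alpha-1)/\alpha}$) is what makes the exponents align; after that, the rest of the argument is routine case analysis plus the standard monotonicity $\lambda_{\alpha}(H') \le \lambda_{\alpha}(G)$ whenever $H' \subseteq G$.
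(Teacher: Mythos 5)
Your proof is correct and follows essentially the same route as the paper: both establish the edge bound $\lambda_{\alpha}(H)\le(k!\,e(H))^{(\alpha-1)/\alpha}$ (the paper's Lemma~\ref{lem:edgebound}) via H\"older/power-mean plus a symmetric-function estimate, and then combine it with $(\mathcal{F},n,0,c)$-universality and monotonicity of $\lambda_\alpha$ under edge addition. The only cosmetic difference is that you bound $k!\,e_k(y)\le(\sum_i y_i)^k$ by a direct term-counting argument where the paper cites Maclaurin's inequality; these are equivalent.
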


We deduce this from the following lemma which has independent interest.

\begin{lemma} \label{lem:edgebound}
Let $\alpha>1$ and $H$ be a $k$-uniform hypergraph with $e$ edges. Then 
\[ \lambda_{\alpha}(H) \le (k!e)^{1-1/\alpha} . \]
\end{lemma}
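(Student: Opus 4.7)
My plan is to combine a single application of H\"older's inequality (in its power-mean form, over the edges of $H$) with the classical fact that the hypergraph Lagrangian of any $k$-uniform hypergraph is at most $1/k!$ whenever the weights form a probability distribution.

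First, I would observe that we may assume the maximiser $x$ has non-negative entries, since replacing each $x_v$ by $|x_v|$ leaves $\|x\|_{\alpha}$ unchanged and cannot decrease the value of $\tau_H(x,\ldots,x) = k!\sum_{E \in E(H)} \prod_{v \in E} x_v$. The key change of variables is then $y_v := x_v^{\alpha}$, which turns the constraint $\|x\|_{\alpha}=1$ into the probability-simplex condition $\sum_v y_v = 1$ and rewrites each edge contribution as $\prod_{v \in E} x_v = \bigl(\prod_{v \in E} y_v\bigr)^{1/\alpha}$.

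The next step is to use the concavity of $t \mapsto t^{1/\alpha}$ (valid since $\alpha > 1$) and apply Jensen's inequality to the $e$-term sum, yielding
\[
\sum_{E \in E(H)} \Bigl(\prod_{v \in E} y_v\Bigr)^{1/\alpha}
\;\le\; e^{\,1 - 1/\alpha} \Bigl(\sum_{E \in E(H)} \prod_{v \in E} y_v\Bigr)^{1/\alpha}.
\]
This reduces the problem to bounding the hypergraph Lagrangian $L(H,y) := \sum_{E \in E(H)} \prod_{v \in E} y_v$ on the probability simplex.

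Finally, I would bound $L(H,y) \le 1/k!$ as follows: since $E(H)$ is a subset of the $k$-element subsets of $V(H)$, we have $L(H,y) \le e_k(y)$, the $k$th elementary symmetric polynomial in the $y_v$; and expanding $(\sum_v y_v)^k$ as a sum over ordered $k$-tuples (including repetitions) shows $\bigl(\sum_v y_v\bigr)^k \ge k!\, e_k(y)$, so $e_k(y) \le 1/k!$ when $\sum_v y_v = 1$. Combining the three steps gives $\tau_H(x,\ldots,x) \le k! \cdot e^{\,1-1/\alpha} \cdot (1/k!)^{1/\alpha} = (k!e)^{1-1/\alpha}$, which is exactly the claimed bound. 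I do not expect any genuine obstacle here; the only real idea is the substitution $y_v = x_v^{\alpha}$, after which the remaining estimates are standard convexity inequalities.
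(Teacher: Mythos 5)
Your proof is correct and is essentially the paper's argument: your Jensen step on the concave map $t\mapsto t^{1/\alpha}$ is exactly the power-mean inequality the paper applies over the edges, and the substitution $y_v=x_v^\alpha$ is implicit there as well. The only cosmetic difference is in bounding $\sum_{E}\prod_{v\in E}y_v$: you expand $(\sum_v y_v)^k \ge k!\,e_k(y)$ directly, while the paper invokes Maclaurin's inequality to get the (slightly sharper, but then discarded) bound $\binom{n}{k}/n^k$, so the two routes coincide for all practical purposes.
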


Another consequence of Lemma \ref{lem:edgebound} is an $\alpha$-spectral 
approximate version of the Lov\'asz form of the Kruskal-Katona theorem.
Let $H$ be a $k$-uniform hypergraph. Its \emph{shadow} $\partial H$ is the $(k-1)$-uniform
hypergraph consisting of all $(k-1)$-sets that are contained in some edge of $H$. 
Lov\'asz \cite[Ex 13.31(b)]{Lo} showed that if $e(H) = \tbinom{x}{k} = x(x-1) \cdots (x-k+1)/k!$ 
for some real number $x \ge k$ then $e(\partial H) \ge \tbinom{x}{k-1}$,
with equality if and only if $x$ is an integer and $H = K^k_x$.

\begin{cor} \label{cor:kk}
Let $\alpha>1$ and $H$ be a $k$-uniform hypergraph with 
$\lambda_{\alpha}(H) \ge \Big( k!\tbinom{x}{k} \Big)^{1-1/\alpha}$, 
for some real $x \ge k-1$. Then $e(\partial H) \ge \tbinom{x}{k-1}$.
\end{cor}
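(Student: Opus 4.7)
The plan is to chain three elementary steps: invert Lemma \ref{lem:edgebound} to convert the spectral hypothesis into an edge-count hypothesis, apply Lov\'asz's form of Kruskal--Katona to pass from edge count to shadow size, and use monotonicity of the generalised binomial coefficients to keep the bound phrased in terms of $x$.

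First I would apply Lemma \ref{lem:edgebound}, which gives $\lambda_{\alpha}(H) \le (k!\,e(H))^{1-1/\alpha}$. Combined with the hypothesis $\lambda_{\alpha}(H) \ge (k!\binom{x}{k})^{1-1/\alpha}$, and using that the exponent $1-1/\alpha$ is positive since $\alpha > 1$, this immediately yields $e(H) \ge \binom{x}{k}$. Next, the function $y \mapsto \binom{y}{k} = y(y-1)\cdots(y-k+1)/k!$ is continuous, vanishes at $y=k-1$, and is strictly increasing on $[k-1, \infty)$ (its derivative is a sum of products of nonnegative factors, at least one of which is positive for $y > k-1$), so there is a unique real $y \ge k-1$ with $e(H) = \binom{y}{k}$, and monotonicity gives $y \ge x$. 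Lov\'asz's form of the Kruskal--Katona theorem then yields $e(\partial H) \ge \binom{y}{k-1}$, and a further use of monotonicity of $\binom{\cdot}{k-1}$ on $[k-1,\infty)$ delivers $\binom{y}{k-1} \ge \binom{x}{k-1}$, completing the chain.

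I do not expect a genuine obstacle; the whole argument is essentially bookkeeping around Lemma \ref{lem:edgebound}. The only delicate point is the degenerate case $e(H) = 0$, which forces $\binom{x}{k} = 0$ and hence $x = k-1$; this boundary can be handled either by interpreting the corollary as vacuous when $H$ has no edges, or by observing that for any $H$ with at least one edge one has $y \ge k$, so Lov\'asz's theorem applies in exactly the form stated in the excerpt. Once this point is acknowledged, the proof reduces to the short calculation sketched above.
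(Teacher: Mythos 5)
Your proof is correct and uses the same two ingredients as the paper's proof — Lemma~\ref{lem:edgebound} and Lov\'asz's form of Kruskal--Katona — just run in the forward direction ($\lambda_\alpha \Rightarrow e(H) \Rightarrow e(\partial H)$), whereas the paper argues contrapositively ($e(\partial H) < \tbinom{x}{k-1} \Rightarrow e(H) < \tbinom{x}{k} \Rightarrow \lambda_\alpha(H) < (k!\tbinom{x}{k})^{1-1/\alpha}$), with the monotonicity facts you spell out being implicit in that contrapositive step. Your remark about the degenerate case $x=k-1$, $e(H)=0$ in fact flags a genuine (if trivial) exception to the corollary as stated — there $\lambda_\alpha(H)=0=(k!\tbinom{x}{k})^{1-1/\alpha}$ yet $e(\partial H)=0<1=\tbinom{k-1}{k-1}$ — which the paper's proof silently glosses over by invoking Lov\'asz (whose hypothesis is $x\ge k$) without checking the boundary.
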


Theorem \ref{thm:strongstab} is useful for `degenerate' Tur\'an-type problems,
in which the universality parameter $c$ is often not only small,
but even tends to zero as $n$ tends to infinity.
One such case is the problem of determining the
maximum size of a $k$-uniform hypergraph $H$ on $n$ vertices that is \emph{$t$-intersecting}, in that 
$|E \cap E'| \geq t$ for all edges $E \neq E'$ of $H$. The following definition describes a natural
construction for this problem.

\begin{definition}
  A $k$-uniform hypergraph is a \emph{$t$-star} if there exists a vertex set $W$ (called the
  \emph{center}) of size $t$ such that every edge contains $W$.  The \emph{complete
  $k$-uniform, $n$-vertex, $t$-star}, denoted $S^k_{n,t}$, is the $n$-vertex $t$-star which has
  $\binom{n-t}{k-t}$ edges. 
\end{definition}

Erd\H{o}s, Ko and Rado \cite{EKR} showed that when $n$ is sufficiently large, 
$S^k_{n,t}$ is the largest $t$-intersecting $k$-uniform hypergraph on $n$ vertices.
Wilson \cite{W} showed the same result for $n \ge (t+1)(k-t+1)$,
which is best possible, as other constructions are larger for smaller $n$.
The full picture was the subject of a longstanding conjecture of Frankl, 
finally resolved by the Complete Intersection Theorem of Ahlswede and Khachatrian \cite{AK}. 
In the following we give a spectral analogue of a strong stability form of the result of Erd\H{o}s,
Ko and Rado. 

\begin{cor} \label{cor:intersecting}
  For any $k \ge 2$, $t \ge 1$ and $\alpha > 1$  
  there is $n_0$ such that the following holds for $n \ge n_0$.
  Let $H$ be an $n$-vertex, $k$-uniform, $t$-intersecting hypergraph.
  Then $\lambda_{\alpha}(H) \leq \lambda_{\alpha}(S^k_{t,n})$, 
  with equality if and only if $H = S^k_{t,n}$.
  Furthermore, there is a constant $c = c(k,t)$ such that 
  if $\lambda_{\alpha}(H) > cn^{-(\alpha-1)/\alpha} \lambda_{\alpha}(S^k_{t,n})$ then $H$ is a star.
\end{cor}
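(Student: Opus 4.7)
The plan is to apply Theorem \ref{thm:strongstab} with forbidden family $\mathcal{F}_t$ consisting of pairs of $k$-edges that share fewer than $t$ vertices (so $\mathcal{F}_t$-free coincides with $t$-intersecting), and with universal family $\mathcal{G}_n$ the collection of all $n$-vertex $t$-stars; these are all $t$-intersecting and share a common value of $\lambda_\alpha$. By the theorem of Wilson, $\text{ex}(n,\mathcal{F}_t)=\binom{n-t}{k-t}$ for $n$ large, and a Lagrange multiplier calculation for $G=S^k_{n,t}$ (with value $a$ on each of the $t$ centre vertices and $b$ on each of the other $n-t$ vertices, constrained by $ta^\alpha+(n-t)b^\alpha=1$) locates the optimum at $a^\alpha=1/k$ and $b^\alpha=(k-t)/(k(n-t))$, yielding
\[
\lambda_\alpha(S^k_{n,t})=k!\binom{n-t}{k-t}k^{-t/\alpha}\Bigl(\tfrac{k-t}{k(n-t)}\Bigr)^{(k-t)/\alpha}=\Theta\bigl(n^{(k-t)(\alpha-1)/\alpha}\bigr).
\]
Consequently $\lambda_\alpha(\mathcal{G}_n)^{\alpha/(\alpha-1)}/(k!\,\text{ex}(n,\mathcal{F}_t))$ tends to a positive constant as $n\to\infty$.

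For the universality hypothesis I will invoke the stability form of Erd\H{o}s--Ko--Rado (Hilton--Milner for $t=1$, and Frankl or Ahlswede--Khachatrian for general $t$), which guarantees for $n$ sufficiently large that every non-star $t$-intersecting $k$-uniform hypergraph on $n$ vertices has at most $O(n^{k-t-1})$ edges. Taking $c_0=C/n$ for a suitably large $C=C(k,t)$ therefore makes $\mathcal{G}_n$ an $(\mathcal{F}_t,n,0,c_0)$-universal family, and the hypothesis $c_0<\lambda_\alpha(\mathcal{G}_n)^{\alpha/(\alpha-1)}/(k!\,\text{ex}(n,\mathcal{F}_t))$ of Theorem \ref{thm:strongstab} is automatic because its left side is $O(1/n)$ while the right side is $\Theta(1)$. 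The theorem then delivers $\lambda_\alpha(H)\le\lambda_\alpha(S^k_{n,t})$ for all $t$-intersecting $H$ on $n$ vertices, together with $H\subseteq G$ for some $t$-star $G$ whenever $\lambda_\alpha(H)>(c_0 k!\,\text{ex}(n,\mathcal{F}_t))^{(\alpha-1)/\alpha}$; using the estimate for $\lambda_\alpha(S^k_{n,t})$, this last threshold can be rewritten as $c(k,t,\alpha)n^{-(\alpha-1)/\alpha}\lambda_\alpha(S^k_{n,t})$, giving the ``furthermore'' clause.

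For the equality case, suppose $\lambda_\alpha(H)=\lambda_\alpha(S^k_{n,t})$; the above places $H\subseteq G$ inside some $t$-star $G$ with centre $W$. Choose a nonnegative optimiser $z$ for $H$ (valid since replacing $x$ by $|x|$ componentwise only increases $\tau_H(x,\ldots,x)$). From $\tau_G(z,\ldots,z)-\tau_H(z,\ldots,z)=k!\sum_{E\in G\setminus H}\prod_{v\in E}z_v\ge 0$ and $\tau_G(z,\ldots,z)\le\lambda_\alpha(G)=\lambda_\alpha(H)=\tau_H(z,\ldots,z)$, the sum must vanish, so each missing edge contains some $v$ with $z_v=0$. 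Setting $Z=\{v:z_v=0\}$, no element of $W$ can lie in $Z$ (else every edge of $H$, which contains $W$, would contribute $0$ to $\tau_H$, giving $\lambda_\alpha(H)=0$), so $Z\subseteq V\setminus W$; then all edges of $G$ avoiding $Z$ lie in $H$, and $\tau_H(z,\ldots,z)$ equals the $\tau$-value at $z|_{V\setminus Z}$ of the $t$-star on $V\setminus Z$, yielding $\lambda_\alpha(S^k_{n,t})=\lambda_\alpha(H)\le\lambda_\alpha(S^k_{n-|Z|,t})$. Since the explicit formula above shows $\lambda_\alpha(S^k_{m,t})$ is strictly increasing in $m$, we conclude $Z=\emptyset$; then $z>0$ everywhere, the vanishing sum forces $G\setminus H=\emptyset$, and $H=G$. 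I expect the main delicate point to be this last zero-set analysis for the equality characterisation; the stability input and the application of Theorem \ref{thm:strongstab} itself are then largely routine.
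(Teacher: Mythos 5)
Your proof takes essentially the same route as the paper: apply Theorem~\ref{thm:strongstab} with $\mathcal{F}=\{F_0,\dots,F_{t-1}\}$ (edge pairs meeting in fewer than $t$ points) and the star family, compute $\lambda_\alpha(S^k_{t,n})$ via a two-parameter Lagrange optimisation (reproducing Lemma~\ref{lem:radii}(i)), and supply the $(\mathcal{F},n,0,C/n)$-universality from the stability form of Erd\H{o}s--Ko--Rado. Your attribution of the stability input to Hilton--Milner/Frankl/Ahlswede--Khachatrian is actually more accurate than the paper's bare citation of \cite{EKR}, and choosing $\mathcal{G}_n$ to be \emph{all} $t$-stars rather than just $\{S^k_{t,n}\}$ is an immaterial variant, since $\lambda_\alpha(\mathcal{G}_n)$ is a supremum and ``contained in some $t$-star'' is the same as ``contained in some complete $t$-star''. (The remark that all $t$-stars ``share a common value of $\lambda_\alpha$'' is not literally true, but the supremum is what matters.) The one genuine addition you make is the zero-set argument for the equality case: the paper's proof establishes $\lambda_\alpha(H)\le\lambda_\alpha(S^k_{t,n})$ and the ``furthermore'' star containment, but never explicitly explains why a proper sub-hypergraph of $S^k_{t,n}$ must have strictly smaller $\alpha$-spectral radius. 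Your argument --- restricting attention to the complete $t$-star $G$ containing $H$, showing the optimiser $z$ of $H$ would then also be an optimiser of $G$, locating the zero-set $Z$ of $z$ inside $V\setminus W$, deducing $\lambda_\alpha(S^k_{n,t})\le\lambda_\alpha(S^k_{n-|Z|,t})$ and invoking strict monotonicity of $\lambda_\alpha(S^k_{m,t})$ in $m$ --- cleanly closes that gap without having to establish uniqueness of the optimiser for $S^k_{t,n}$.
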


\section{Universal Families} 
\label{sec:universal}

In this section we prove our general results on universal families.
After giving some preliminary facts in the first subsection,
we prove Theorem \ref{thm:density} in the second subsection,
and Theorem \ref{thm:strongstab} in the third subsection.

\subsection{Preliminaries}

We adopt the following notation. Let $H$ be a $k$-uniform hypergraph. 
We let 
\[E_o(H) = \{ (v_1,\dots,v_k) \in V(H)^{k} : \{v_1,\dots,v_k\} \in E(H)\} \] 
be the ordered edge set of $H$.
For any vertex $u$ of $H$ we let 
\[L_o(u) = \{ (v_1,\dots,v_{k-1}) \in V(H)^{k-1} : \{v_1,\dots,v_{k-1},u\} \in E(H)\}\]
be the ordered link hypergraph of $u$.
Let $\vec{w}$ be a vector in $W = \mathbb{R}^{V(H)}$.
For any $d \geq 1$ and any tuple $S \in V(H)^d$ we write 
\[w(S) = \prod_{i \in S} w_i.\]

Next, we derive two useful inequalities that are consequences of the assumptions of Theorem
\ref{thm:density}.  Assume that $k, n, N, \alpha, \delta, \mathcal{F}$, and $\mathcal{G}_n$ satisfy
equations \eqref{eq:density-ex-smooth} and \eqref{eq:density-lambda-good} for all $n \geq N$.  We
will adopt the notation that \[\mu_n := \lambda_{\alpha}(\mathcal{G}_n).\] First, using
\eqref{eq:density-lambda-good} and the fact that the ratios $\binom{n}{k}^{-1}
\text{ex}(n,\mathcal{F})$ are decreasing with $n$ and tend to $\pi(\mathcal{F})$, we have that
\begin{align} \label{eq:mu}
 \mu_n = (1+o(1)) \pi(\mathcal{F}) n^{k - k/\alpha}.
\end{align}
Next we want to bound the gap between $\mu_n$ and $\mu_{n-1}$.  Let $M = (k-\frac{k}{\alpha})
\pi(F) n^{k - k/\alpha-1}$; we will show that $\mu_n - \mu_{n-1}$ is close to $M$.  First, use
the triangle inequality and \eqref{eq:density-lambda-good} to obtain
\begin{align}
  |\mu_n &- \mu_{n-1} - M | \nonumber \\
  &\leq \left| k! ex(n,\mathcal{F})n^{-k/\alpha} - k! ex(n-1,\mathcal{F}) (n-1)^{-k/\alpha}
  -M \right| +
  2\delta n^{k - k/\alpha - 1} \nonumber \\
  &= k! n^{-k/\alpha} \left| ex(n,\mathcal{F}) - ex(n-1,\mathcal{F}) \left( \frac{n-1}{n}
  \right)^{-k/\alpha} - \frac{M n^{k/\alpha}}{k!} \right| + 2\delta n^{k-k/\alpha-1}.
  \label{eq:gapofexp}
\end{align}
Using \eqref{eq:density-ex-smooth} and the fact that for large $n$, $(\frac{n-1}{n})^{-k/\alpha} = 1
+ \frac{k}{\alpha n} \pm O(\frac{1}{n^2})$,
\begin{align*}
  &\left| ex(n,\mathcal{F}) - ex(n-1,\mathcal{F}) \left( \frac{n-1}{n}
  \right)^{-k/\alpha} - \frac{Mn^{k/\alpha}}{k!} \right| \\
  & \quad \quad \leq
  \left| ex(n,\mathcal{F}) - \left( ex(n,\mathcal{F}) - \pi(F) \binom{n}{k-1} \right) \left( 1 +
  \frac{k}{\alpha n} \right) - \frac{Mn^{k/\alpha}}{k!} \right| + 2\delta n^{k-1} \\
  & \quad \quad \leq \left| \frac{-k}{\alpha n} ex(n,\mathcal{F}) + \pi(F) \binom{n}{k-1} - \frac{M
  n^{k/\alpha}}{k!} \right| + 3 \delta n^{k-1} \\
  \intertext{where the last inequality used that $\frac{1}{n} \binom{n}{k-1} = o(n^{k-1})$.  Now using that
$\binom{n}{k}^{-1} ex(n,\mathcal{F})$ converges to $\pi(F)$, for large $n$ the above inequality
continues as}
  &\quad \quad \leq \left| \frac{-k}{\alpha n} \pi(F) \frac{n^{k}}{k!} + k \pi(F) \frac{n^{k-1}}{k!} - \frac{M
  n^{k/\alpha}}{k!}  \right| + 4 \delta n^{k-1}. \nonumber
\end{align*}
By the definition of $M$, the expression inside the above absolute value is zero.  Therefore,
\eqref{eq:gapofexp} simplifies to
\begin{align*}
  |\mu_n - \mu_{n-1} - M| \leq k! n^{-k/\alpha} \left( 4\delta n^{k-1} \right) + 2\delta
  n^{k-k/\alpha-1} \leq 5k!\delta n^{k-k/\alpha - 1}.
\end{align*}
In summary, we have proved that
\begin{align} \label{eq:mudiff}
  \left| \mu_n - \mu_{n-1} - \left( k - \frac{k}{\alpha} \right) \pi(\mathcal{F}) n^{k - k/\alpha -
  1} \right| \leq 5k!\delta n^{k-k/\alpha-1}.
\end{align}

Next, we will estimate $ex_1(n,\mathcal{F})$.  Indeed, it is easy to see that
$\frac{n}{k} ex_1(n,\mathcal{F}) \leq ex(n,\mathcal{F})$ so that
\begin{align*}
  ex_1(n,\mathcal{F}) \leq (1+o(1)) \frac{k}{n} \pi(F) \binom{n}{k} = (1+o(1)) \pi(F)
  \binom{n}{k-1}.
\end{align*}
On the other hand, given a hypergraph $H$ with $(\pi(F) + \epsilon) \binom{n}{k}$ edges, we can
delete its vertices of small degree obtaining a subhypergraph $H'$ on $m \geq \epsilon^{1/k}n$
vertices with minimum degree at least $\pi(F) \binom{m}{k-1}$ (see e.g.\ \cite[p. 121]{Bollobas}
for details in the case $k=2$).  Therefore,
\begin{align} \label{eq:exoneispidense}
  ex_1(n,\mathcal{F}) = (1+o(1)) \pi(F) \binom{n}{k-1}.
\end{align}

Finally, in our calculations we will frequently use H\"older's inequality that
$|x \cdot y| \le \|x\|_p \|y\|_q$ when $p,q>1$ and $p^{-1}+q^{-1}=1$,
and Bernoulli's inequality, which is as follows.
Suppose that $t>-1$ and $a \in \mathbb{R}$. 
If $a \le 0$ or $a \ge 1$ then $(1+t)^a \ge 1+at$;
if $0 \le a \le 1$ then $(1+t)^a \le 1+at$.

\subsection{Proof of Theorem \ref{thm:density}}

Throughout this subsection we let $H$ be a $k$-uniform, $\mathcal{F}$-free hypergraph 
on $n$ vertices, and $\vec{w}$ be a vector such that
\[ \tau_H(\vec{w},\dots,\vec{w}) = \lambda_{\alpha}(H) 
\text{ and } \left\lVert \vec{w} \right\rVert_{\alpha} = 1. \] 

\begin{lemma} \label{lem:neighborsum}
For all $1 \leq i \leq n$,
\begin{align*} 
  \sum_{S \in L_o(i)} w(S) = \tau_H(e_i, \vec{w},\dots,\vec{w}) = \lambda_{\alpha}(H) w_i^{\alpha-1}.
\end{align*}
\end{lemma}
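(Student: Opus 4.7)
The plan is to recognize the claimed identity as the first-order optimality condition for the variational problem defining $\lambda_\alpha(H)$, and to derive it via Lagrange multipliers combined with Euler's identity for homogeneous functions.

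The first equality is essentially bookkeeping. Expanding $\tau_H(e_i, \vec{w}, \dots, \vec{w})$ by multilinearity yields a sum over $(v_2, \dots, v_k) \in V(H)^{k-1}$ of $w_{v_2} \cdots w_{v_k} \cdot \tau_H(e_i, e_{v_2}, \dots, e_{v_k})$. By the defining values of $\tau_H$ on standard basis vectors, only tuples with $\{i, v_2, \dots, v_k\} \in E(H)$ contribute, each with coefficient $1$; this set of tuples is exactly $L_o(i)$ (up to relabelling the position of $i$), giving $\sum_{S \in L_o(i)} w(S)$.

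For the second equality, first observe that we may assume $w_i \ge 0$ for all $i$, since replacing each entry by its absolute value preserves $\|\vec{w}\|_\alpha$ and cannot decrease $\tau_H(\vec{w}, \dots, \vec{w})$ (all its coefficients being nonnegative). Set $f(\vec{x}) := \tau_H(\vec{x}, \dots, \vec{x})$ and $g(\vec{x}) := \sum_j x_j^\alpha$, so $\vec{w}$ maximises $f$ subject to $g = 1$. Using the symmetric $k$-linearity of $\tau_H$ one checks $\partial_i f(\vec{w}) = k\, \tau_H(e_i, \vec{w}, \dots, \vec{w})$ and $\partial_i g(\vec{w}) = \alpha\, w_i^{\alpha - 1}$, so Lagrange multipliers provide a scalar $\mu$ with $k\, \tau_H(e_i, \vec{w}, \dots, \vec{w}) = \mu \alpha\, w_i^{\alpha-1}$ at every coordinate with $w_i > 0$. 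To pin down $\mu$, multiply by $w_i$, sum over $i$, and invoke Euler's identity on each side (using that $f$, $g$ are homogeneous of degrees $k$, $\alpha$ respectively): the left-hand side becomes $k f(\vec{w}) = k \lambda_\alpha(H)$ and the right-hand side becomes $\mu \alpha g(\vec{w}) = \mu \alpha$, so $\mu = k \lambda_\alpha(H)/\alpha$. Substituting back gives the claimed identity at every index with $w_i > 0$.

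The only real obstacle I anticipate is the boundary case $w_i = 0$, where the Lagrange condition is vacuous but the identity still requires $\tau_H(e_i, \vec{w}, \dots, \vec{w}) = 0$. I would handle this by direct perturbation: if $A := \tau_H(e_i, \vec{w}, \dots, \vec{w}) > 0$, consider $\vec{w}_t := \vec{w} + t e_i$ for small $t > 0$. Because no edge of $H$ contains vertex $i$ more than once, every monomial of $f(\vec{w}_t)$ that involves $e_i$ at least twice vanishes, and so $\tau_H(\vec{w}_t, \dots, \vec{w}_t) = \lambda_\alpha(H) + kAt$ exactly, while $\|\vec{w}_t\|_\alpha^k = (1 + t^\alpha)^{k/\alpha} = 1 + O(t^\alpha)$. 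Since $\alpha > 1$ gives $t^\alpha = o(t)$ as $t \to 0^+$, the normalised objective value would exceed $\lambda_\alpha(H)$ for sufficiently small $t$, contradicting the optimality of $\vec{w}$.
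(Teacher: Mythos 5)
Your argument is essentially identical to the paper's: you expand $\tau_H(e_i,\vec{w},\dots,\vec{w})$ by multilinearity, apply Lagrange multipliers to $f=\tau_H(x,\dots,x)$ subject to $\sum_j x_j^\alpha=1$, and recover the multiplier by multiplying through by $w_i$ and summing. The extra care you take about nonnegativity and the boundary case $w_i=0$ is a legitimate refinement the paper does not spell out, but it does not change the route.
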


\begin{proof} 
First, expanding the definition of $\tau_H$, we have that
\begin{align*}
  \tau_H(e_i, \vec{w}, \dots, \vec{w}) &= \tau_H\left(e_i, \sum_{j_2} w_{j_2} e_{j_2}, \dots,
  \sum_{j_k} w_{j_k} e_{j_k} \right)  \\
  &= \sum_{j_2,\dots,j_k} w_{j_2} \cdots w_{j_k}
  \tau_H(e_i, e_{j_2}, \dots, e_{j_k}) = \sum_{S \in L_o(i)} w(S).
\end{align*}

Now define the following two functions:
\begin{align*}
  f(x) &= \tau_H(x,\dots,x) = \sum_{(v_1,\dots,v_k) \in E_o(H)} x_{v_1}\cdots x_{v_k}\\
  g(x) &= \sum_i x_i^\alpha.
\end{align*}
By definition, $\vec{w}$ maximizes $f(x)$ subject to $g(x) = 1$, so the method of Lagrange multipliers implies
there exists a constant $\Lambda$ such that for all $1 \leq i \leq n$,
\begin{align*}
  \left. \frac{\partial f}{\partial x_i} \right|_{\vec{w}} &= 
  \Lambda \left. \frac{\partial g}{\partial x_i} \right|_{\vec{w}}.
\end{align*}
Note that
\begin{align*}
  \frac{\partial f}{\partial x_i} &= \frac{\partial}{\partial x_i} \sum_{(v_1,\dots,v_k) \in E_o(H)}
    x_{v_1} \cdots x_{v_k} \\
    &= k \sum_{(v_1,\dots,v_{k-1}) \in L_o(i)} x_{v_1}\cdots x_{v_{k-1}} \\
    &= k \, \tau_H(e_i, x, \dots, x).
\end{align*}
Therefore
\begin{align}
  \left. \frac{\partial f}{\partial x_i} \right|_{\vec{w}} &=
  k \, \tau_H(e_i,\vec{w},\dots,\vec{w}) = \Lambda \alpha w_i^{\alpha-1}. \label{eq:lagrange}
\end{align}
Now
\begin{align*}
  \lambda_{\alpha}(H) = \tau_H(\vec{w},\dots,\vec{w}) = \sum_{1 \leq i \leq n} w_i \tau_H(e_i,
  \vec{w},\dots,\vec{w}) = \sum_{1 \leq i \leq n} \frac{\Lambda \alpha}{k} w_i^{\alpha} =
  \frac{\Lambda \alpha}{k}.
\end{align*}
The lemma follows by inserting $\Lambda = \lambda_{\alpha}(H) k/\alpha$ into \eqref{eq:lagrange}.
\end{proof} 

Recall that $\mu_n = \lambda_{\alpha}(\mathcal{G}_n)$.

\begin{lemma} \label{lem:smalldegree}
  Suppose $\epsilon > 0$, $0 < \epsilon' < \epsilon (\alpha-1)/\alpha^2$, $n$
  is sufficiently large, and $\lambda_{\alpha}(H) \ge \mu_n$. If
  \begin{align*}
    \delta(H) \le (1 - \epsilon) \pi(\mathcal{F}) \binom{n}{k-1},
  \end{align*}
  then there exists a coordinate $1 \leq u \leq n$ such that
  \begin{align*}
    w_u < \frac{1 - \epsilon'}{n^{1/\alpha}}.
  \end{align*}
\end{lemma}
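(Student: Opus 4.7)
The plan is by contrapositive: assume $w_v \geq (1-\epsilon')/n^{1/\alpha}$ for every vertex $v$, and derive a contradiction. First, choose $u^* \in V(H)$ with $d(u^*) = \delta(H) \leq (1-\epsilon)\pi(\mathcal{F})\binom{n}{k-1}$. Applying Lemma \ref{lem:neighborsum} at $u^*$ gives
\[
\lambda_\alpha(H)\, w_{u^*}^{\alpha-1} = \sum_{S \in L_o(u^*)} w(S).
\]
Viewing the right-hand side as the value at $\vec w$ of the adjacency map of the $(k-1)$-uniform hypergraph on $V(H)$ whose edges are the $(k-1)$-sets $E$ with $E \cup \{u^*\} \in E(H)$ (which has exactly $\delta(H)$ edges), and applying Lemma \ref{lem:edgebound} together with $\|\vec w\|_\alpha \leq 1$, we obtain
\[
\sum_{S \in L_o(u^*)} w(S) \leq \bigl((k-1)!\,\delta(H)\bigr)^{(\alpha-1)/\alpha}.
\]

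Next, I would substitute the lower bounds $\lambda_\alpha(H) \geq \mu_n$ and $w_{u^*} \geq (1-\epsilon')/n^{1/\alpha}$, use the asymptotic $\mu_n = (1+o(1))\pi(\mathcal{F})n^{k-k/\alpha}$ from \eqref{eq:mu} and the identity $(k-1)!\binom{n}{k-1} = (1+o(1))n^{k-1}$, and raise the resulting inequality to the power $\alpha/(\alpha-1)$. This produces a lower bound on $\delta(H)$; comparing it with the upper bound $\delta(H) \leq (1-\epsilon)\pi(\mathcal{F})\binom{n}{k-1}$ reduces the problem to a purely scalar inequality. Using Bernoulli's inequality $(1-\epsilon')^\alpha \geq 1 - \alpha\epsilon'$, the quantitative choice $\epsilon' < \epsilon(\alpha-1)/\alpha^2$ is calibrated exactly so that the final scalar inequality fails for $n$ sufficiently large, yielding the desired contradiction.

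The main obstacle is tightness of the Lemma \ref{lem:edgebound} step: the resulting bound carries a factor $\pi(\mathcal{F})^{1/(\alpha-1)}$ strictly less than $1$ when $\pi(\mathcal{F}) < 1$, which must be absorbed by the $(1-\epsilon)$ slack in the minimum-degree hypothesis. Closing the gap uniformly in $\pi(\mathcal{F})$ is the delicate part; it likely requires either exploiting the near-uniformity of $\vec w$ forced by the contrapositive assumption (to replace $((k-1)!\,\delta(H))^{(\alpha-1)/\alpha}$ by the sharper ``uniform-weight'' value $(k-1)!\,\delta(H)\,n^{-(k-1)/\alpha}$), or re-routing the comparison through \eqref{eq:exoneispidense} on $\text{ex}_1(n,\mathcal{F})$ so that the correct power of $\pi(\mathcal{F})$ appears on both sides.
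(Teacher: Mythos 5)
Your framework is the right one — argue by contradiction, apply Lemma~\ref{lem:neighborsum} at a minimum-degree vertex $u$, compare with the degree hypothesis, and finish with Bernoulli — and you correctly diagnose that the naive application of Lemma~\ref{lem:edgebound} to the link loses a factor of a power of $\pi(\mathcal{F})<1$ that the $(1-\epsilon)$ slack cannot absorb. However, the fix you sketch does not close the gap. The contrapositive assumption gives only a \emph{lower} bound $w_i \ge (1-\epsilon')n^{-1/\alpha}$ on every coordinate; it does not bound any $w_i$ from above, so you cannot replace $\sum_{S\in L_o(u)} w(S)$ by the ``uniform-weight'' quantity $(k-1)!\,\delta(H)\,n^{-(k-1)/\alpha}$ — a few unusually large coordinates in the link could still make that sum blow up. The lower bound on the coordinates must be converted into an upper bound on $\sum_{S\in L_o(u)} w(S)$ by an indirect route.

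The missing idea is to apply H\"older with exponents $(\alpha,\alpha/(\alpha-1))$ to the sum over the ordered link,
\[
\sum_{S\in L_o(u)} w(S) \le |L_o(u)|^{(\alpha-1)/\alpha}\left(\sum_{S\in L_o(u)} w(S)^{\alpha}\right)^{1/\alpha},
\]
and then bound the inner sum via the \emph{complement}: since $\sum_{S\in V^{k-1}} w(S)^{\alpha} = \bigl(\sum_i w_i^\alpha\bigr)^{k-1} = 1$, the lower bound $w_i \ge (1-\epsilon')n^{-1/\alpha}$ forces
\[
\sum_{S\in L_o(u)} w(S)^{\alpha} = 1 - \sum_{S\notin L_o(u)} w(S)^{\alpha} \le 1 - \bigl|\overline{L_o(u)}\bigr|\,(1-\epsilon')^{(k-1)\alpha}\,n^{-(k-1)} \le \pi(\mathcal{F}),
\]
using $|L_o(u)| \le (1-\epsilon)\pi(\mathcal{F})n^{k-1}$ to make $|\overline{L_o(u)}|$ large. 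Plugging this into the H\"older step yields $\lambda_\alpha(H) w_u^{\alpha-1} \le |L_o(u)|^{(\alpha-1)/\alpha}\pi(\mathcal{F})^{1/\alpha}$, in which the powers of $\pi(\mathcal{F})$ now cancel correctly against $\mu_n\approx\pi(\mathcal{F})n^{k(\alpha-1)/\alpha}$, and the purely scalar contradiction $(1-\epsilon')^\alpha \le (1-\epsilon)^{(\alpha-1)/\alpha}$ follows by Bernoulli as you intended. So the issue is not merely ``tightness of Lemma~\ref{lem:edgebound}''; Lemma~\ref{lem:edgebound} is the wrong tool here because it ignores the near-uniformity of $\vec w$ entirely, and the complement argument above is the concrete mechanism that exploits it.
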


\begin{proof} 
Assume towards a contradiction that for every $u$, $w_u \geq (1-\epsilon')n^{-1/\alpha}$.
Let $u$ be a vertex of minimum degree. By Lemma \ref{lem:neighborsum} and Holder's Inequality,
\begin{align} \label{eq:mainboundonlambda1}
  \lambda_{\alpha}(H) w_u^{\alpha-1} =   \sum_{S \in L_o(u)} w(S) 
  \leq |L_o(u)|^{(\alpha-1)/\alpha} \left( \sum_{S \in L_o(u)} w(S)^{\alpha} \right)^{1/\alpha}.
\end{align}
Now
\begin{align*}
  \sum_{S \in L_o(u)} w(S)^{\alpha} 
  &= \sum_{S \in V(H)^{k-1}} w(S)^{\alpha} - \sum_{S \notin L_o(u)} w(S)^{\alpha} \\
  &= \left( \sum_i w_i^\alpha \right)^{k-1} - \sum_{S \notin L_o(u)} w(S)^{\alpha} \\
  &= 1 - \sum_{S \notin L_o(u)} w(S)^{\alpha}.
\end{align*}
Since $w_i \geq (1-\epsilon') n^{-1/\alpha}$ for all $i$, we obtain
\begin{align*}
  \sum_{S \in L_o(u)} w(S)^{\alpha} 
  &\leq 1 - |\overline{L_o(u)}| \left( \frac{1-\epsilon'}{n^{1/\alpha}} \right)^{(k-1)\alpha}.
\end{align*}
Since $u$ was chosen with minimum degree, $\epsilon > 0$ is fixed, $H$ is $\mathcal{F}$-free, and
$n$ is sufficiently large,
\begin{align} \label{eq:sizeoforderedlink}
   |L_o(u)| = (k-1)! \delta(H) \le (1 - \epsilon) \pi(\mathcal{F}) n^{k-1}.
\end{align}
Thus
\begin{align} \label{eq:boundonsumoverorderedlink}
  \sum_{S \in L_o(u)} w(S)^{\alpha} 
  &\leq 1 - (1 - \pi(\mathcal{F}) + \epsilon \pi(\mathcal{F})) n^{k-1} 
  \left( \frac{1-\epsilon'}{n^{1/\alpha}} \right)^{(k-1)\alpha} \nonumber \\
  &\leq 1 - (1 - \pi(\mathcal{F}) + \epsilon \pi(\mathcal{F})) (1-\epsilon') \le \pi(\mathcal{F}). 
\end{align}
(The last inequality used $\epsilon' < \epsilon/2$ which follows from $\alpha \geq 1$.)
By \eqref{eq:mu}, for large enough $n$ we have
\[\lambda_{\alpha}(H) \ge \mu_n \ge (1-\epsilon')\pi(\mathcal{F}) n^{k(\alpha-1)/\alpha}.\]
Combining this with \eqref{eq:mainboundonlambda1} and \eqref{eq:boundonsumoverorderedlink}, we have
that
\begin{align*}
  (1-\epsilon') \pi(\mathcal{F}) n^{k(\alpha-1)/\alpha} w_u^{\alpha-1} \leq 
  \lambda_{\alpha}(H) w_u^{\alpha - 1} \leq |L_o(u)|^{(\alpha-1)/\alpha} \pi(\mathcal{F})^{1/\alpha}.
\end{align*}
Using \eqref{eq:sizeoforderedlink}, we obtain
\begin{align*}
  (1-\epsilon') \pi(\mathcal{F}) n^{k(\alpha-1)/\alpha} w_u^{\alpha - 1} \leq
    \Big( (1 - \epsilon) \pi(\mathcal{F}) n^{k-1} \Big)^{(\alpha - 1)/\alpha} \pi(\mathcal{F})^{1/\alpha}.
\end{align*}
Using $w_u \geq (1-\epsilon') n^{-1/\alpha}$ and cancelling 
$\pi(\mathcal{F}) n^{(k-1)(\alpha - 1)/\alpha}$ from both sides, we obtain
\begin{align*}
  (1 - \epsilon')^{\alpha} \leq (1 - \epsilon)^{(\alpha - 1)/\alpha}.
\end{align*}
Then by Bernoulli's inequality,
\[ 1-\epsilon' \le (1 - \epsilon)^{(\alpha - 1)/\alpha^2} \le 1 - \epsilon (\alpha - 1)/\alpha^2, \]
which is a contradiction.
\end{proof} 

\begin{lemma} \label{lem:deletevertex}
  Suppose $n$ is sufficiently large and $u$ is such that $w_u < (1-\epsilon')n^{-1/\alpha}$. Then
  \[ \lambda_{\alpha}(H - u) \ge 
  (1 - (1-1/\alpha)k (1-\epsilon'/2) n^{-1}) \lambda_{\alpha}(H).\] 
  Suppose also that $\lambda_{\alpha}(H) = \mu_n + C$, where $C \ge 0$, and that 
  $\delta \le \epsilon'(1-1/\alpha)\pi(\mathcal{F})/20k!$ 
  and $C \le k^{-1}\delta n^{k(\alpha-1)/\alpha}$. Then
  \begin{align*}
    \lambda_{\alpha}(H - u) \geq \mu_{n-1} + C + \delta n^{k(\alpha-1)/\alpha - 1}.
  \end{align*}
\end{lemma}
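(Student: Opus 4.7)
The natural plan is to build a test vector for $\lambda_\alpha(H-u)$ by restricting the optimal vector $\vec{w}$ for $H$ to $V(H)\setminus\{u\}$; call this restriction $\vec{w}'$. Then $\|\vec{w}'\|_\alpha^\alpha = 1 - w_u^\alpha$ and by the definition of $\lambda_\alpha$,
\begin{align*}
  \lambda_\alpha(H-u) \ge \frac{\tau_{H-u}(\vec{w}',\ldots,\vec{w}')}{(1-w_u^\alpha)^{k/\alpha}}.
\end{align*}
To evaluate the numerator I would isolate the contribution of the ordered edges of $H$ meeting $u$. Since each such ordered edge places $u$ in exactly one of its $k$ coordinates, symmetry of $\tau_H$ reduces this contribution to $k w_u \sum_{S \in L_o(u)} w(S)$, which by Lemma~\ref{lem:neighborsum} equals $k \lambda_\alpha(H) w_u^\alpha$. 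Hence $\tau_{H-u}(\vec{w}',\ldots,\vec{w}') = \lambda_\alpha(H)(1 - k w_u^\alpha)$ and
\begin{align*}
  \lambda_\alpha(H-u) \ge \lambda_\alpha(H) \cdot \frac{1 - k w_u^\alpha}{(1-w_u^\alpha)^{k/\alpha}}.
\end{align*}

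Next I would extract the first asserted inequality by applying Bernoulli's inequality with exponent $-k/\alpha \le 0$ to obtain $(1-w_u^\alpha)^{-k/\alpha} \ge 1 + (k/\alpha) w_u^\alpha$, and multiplying out to get
\begin{align*}
  \frac{1 - k w_u^\alpha}{(1-w_u^\alpha)^{k/\alpha}} \ge 1 - k(1-1/\alpha) w_u^\alpha - (k^2/\alpha) w_u^{2\alpha}.
\end{align*}
The hypothesis $w_u^\alpha < (1-\epsilon')^\alpha/n \le (1-\epsilon')/n$ (using that $(1-\epsilon')^x$ is decreasing in $x$ for $\alpha \ge 1$) bounds the linear term by $k(1-1/\alpha)(1-\epsilon')/n$, while the quadratic error is $O(n^{-2})$ and is absorbed into the $\epsilon'/2$ slack once $n$ is large. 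This yields the first claim.

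For the second claim, substitute $\lambda_\alpha(H) = \mu_n + C$ into the first bound to obtain
\begin{align*}
  \lambda_\alpha(H-u) - C - \mu_{n-1} \ge (\mu_n - \mu_{n-1}) - (1-1/\alpha) k (1-\epsilon'/2) n^{-1} (\mu_n + C),
\end{align*}
so it suffices to show the right-hand side is at least $\delta n^{k(\alpha-1)/\alpha - 1}$. The estimate \eqref{eq:mudiff} gives $\mu_n - \mu_{n-1} \ge [k(1-1/\alpha)\pi(\mathcal{F}) - 5k!\delta] n^{k(\alpha-1)/\alpha - 1}$, while \eqref{eq:mu} and the hypothesis $C \le k^{-1}\delta n^{k(\alpha-1)/\alpha}$ upper-bound the subtracted quantity by $[k(1-1/\alpha)(1-\epsilon'/2)\pi(\mathcal{F})(1+o(1)) + (1-1/\alpha)(1-\epsilon'/2)\delta] n^{k(\alpha-1)/\alpha - 1}$. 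The $\epsilon'/2$ gap in the coefficient of $\pi(\mathcal{F})$ leaves a positive main term of size roughly $k(1-1/\alpha)\pi(\mathcal{F})\epsilon' n^{k(\alpha-1)/\alpha - 1}/2$, which under the hypothesis $\delta \le \epsilon'(1-1/\alpha)\pi(\mathcal{F})/(20k!)$ comfortably dominates both the residual $O(\delta) n^{k(\alpha-1)/\alpha - 1}$ error and the required $\delta n^{k(\alpha-1)/\alpha - 1}$. The only delicate point is tracking the $\epsilon'/2$ slack consistently through the Bernoulli expansion and through \eqref{eq:mudiff}; no new ideas beyond the preliminaries are required.
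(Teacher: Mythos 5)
Your proposal matches the paper's proof almost line for line: you restrict the optimal vector, isolate the edges through $u$ via Lemma~\ref{lem:neighborsum} to get $\tau_{H-u}(\vec{w}',\dots,\vec{w}')=(1-kw_u^\alpha)\lambda_\alpha(H)$, rescale by $(1-w_u^\alpha)^{-k/\alpha}$, apply Bernoulli's inequality, and then for the second part substitute $\lambda_\alpha(H)=\mu_n+C$ and invoke \eqref{eq:mudiff} and \eqref{eq:mu} with the hypotheses on $\delta$ and $C$. The only cosmetic differences are that you relax $(1-\epsilon')^\alpha$ to $(1-\epsilon')$ before absorbing the quadratic error, and you rearrange the final inequality as a difference rather than leaving it in product form; neither changes the substance.
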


\begin{proof} 
Let $H'$, $\vec{w}'$ be the restrictions of $H$, $\vec{w}$ to $V(H) - u$. We can write 
\begin{align*}
 \lambda_{\alpha}(H) & = \tau_H(\vec{w},\dots,\vec{w}) \\
&  = \tau_{H'}(\vec{w}',\dots,\vec{w}') + k w_u \sum_{S \in L_o(u)} w(S) \\
& = \tau_{H'}(\vec{w}',\dots,\vec{w}') + k \lambda_{\alpha}(H) w_u^{\alpha} 
\end{align*}
by Lemma \ref{lem:neighborsum}. Next, note that 
\[\left\lVert \vec{w}' \right\rVert_{\alpha} = (1-w_u^{\alpha})^{1/\alpha}.\]
Letting $\vec{w}^* = (1-w_u^{\alpha})^{-1/\alpha} \vec{w}'$ be the unit vector in direction
$\vec{w}'$, we have
\begin{align*}
\lambda_{\alpha}(H - u) & \geq \tau_{H'}(\vec{w}^*,\dots,\vec{w}^*)
= (1-w_u^{\alpha})^{-k/\alpha} \tau_{H'}(\vec{w}',\dots,\vec{w}') \\
& = (1-w_u^{\alpha})^{-k/\alpha} (1-kw_u^{\alpha}) \lambda_{\alpha}(H).
\end{align*}
By Bernoulli's inequality and $w_u < (1-\epsilon')n^{-1/\alpha}$ we have
\begin{align*}
 (1-w_u^{\alpha})^{-k/\alpha} (1-kw_u^{\alpha}) 
 & \ge  (1 + (k/\alpha)w_u^{\alpha}) (1-kw_u^{\alpha}) \\
 & \ge  1 - (1-1/\alpha)k (1-\epsilon')^{\alpha} n^{-1} - k^2 n^{-2} \\
 & \ge  1 - (1-1/\alpha)k (1-\epsilon'/2) n^{-1},
\end{align*}
which proves the first statement of the lemma.
Next we substitute $\lambda_{\alpha}(H) = \mu_n + C$
and use \eqref{eq:mudiff} to obtain
\[ \lambda_{\alpha}(H - u) \geq 
(1 - (1-1/\alpha)k (1-\epsilon'/2) n^{-1})
\Big(\mu_{n-1} + C + (k(1-1/\alpha)\pi(\mathcal{F})-5k!\delta) n^{k(\alpha-1)/\alpha-1} \Big). \]
By \eqref{eq:mu}, for large enough $n$ we have
\[\mu_n \le (1+\delta/k)\pi(\mathcal{F}) n^{k(\alpha-1)/\alpha}.\]
Using $\delta \le \epsilon'(1-1/\alpha)\pi(\mathcal{F})/20k!$
and $C \le k^{-1}\delta n^{k(\alpha-1)/\alpha}$ we deduce
\begin{align*}
 \lambda_{\alpha}(H - u) & \geq 
 \mu_{n-1} - (1-1/\alpha)k (1-\epsilon'/2) n^{-1} (1+\delta/k) \pi(\mathcal{F}) n^{k(\alpha-1)/\alpha} \\
& \quad   + (1-k/n) \Big(C + (k(1-1/\alpha)\pi(\mathcal{F})-5k!\delta) n^{k(\alpha-1)/\alpha-1} \Big)\\
& \ge \mu_{n-1} + C 
 + \Big(k(1-1/\alpha) \epsilon' \pi(\mathcal{F})/2 - 7k!\delta \Big) n^{k(\alpha-1)/\alpha-1} \\
& \ge \mu_{n-1} + C + \delta n^{k(\alpha-1)/\alpha-1},
\end{align*}
as required.
\end{proof} 

\begin{proof}[Proof of Theorem~\ref{thm:density}] 
Fix $\epsilon>0$, $\epsilon' = \epsilon (\alpha-1)/8\alpha^2$, 
and $\delta = \epsilon'(1-1/\alpha)\pi(\mathcal{F})/20k!$.
We can assume that $N$ is sufficiently large to apply 
Lemmas \ref{lem:smalldegree} and  \ref{lem:deletevertex}, and in addition $N$ is sufficiently large
so that for $m > N$, by \eqref{eq:exoneispidense}, $\text{ex}_1(m,\mathcal{F}) = (1\pm
\frac{\epsilon}{2})\pi(\mathcal{F}) \binom{m}{k-1}$.
Let $n_1 = (N^k k e^{k^2}/\delta)^{2/k\epsilon'(1-1/\alpha)}$ and $n_0 = n_1e^{n_1^k/\delta}$.
Let $\{\mathcal{G}_n\}_{n\rightarrow\infty}$ be as in the statement of the theorem.
Suppose that $n \geq n_0$ and $H$ is a $k$-uniform, $n$-vertex, $\mathcal{F}$-free hypergraph
with $\lambda_{\alpha}(H) \geq \mu_n := \lambda_{\alpha}(\mathcal{G}_n)$.
We construct a sequence $H = H_n, H_{n-1}, \dots$, where $H_i$ is a $k$-uniform, $i$-vertex,
$\mathcal{F}$-free hypergraph with $\lambda_{\alpha}(H_i) = \mu_i + C_i$,
where $C_n \ge 0$ and $C_i \ge C_{i+1} + \delta (i+1)^{k(\alpha-1)/\alpha - 1}$ for $i<n$. 
To do so, suppose we have reached such an $H_i$ for some $i \ge N$, 
and that $\delta(H_i) \le (\pi(\mathcal{F}) - \epsilon/2)\binom{i}{k-1}$. 
By definition, there is a vector $\vec{w}$ such that 
\[ \tau_{H_i}(\vec{w},\dots,\vec{w}) = \lambda_{\alpha}(H_i) 
\text{ and } \left\lVert \vec{w} \right\rVert_{\alpha} = 1. \] 
Let us apply Lemma~\ref{lem:smalldegree} with input $\epsilon/2$ and $\epsilon'$ and note that by
definition of $\epsilon'$, we have that
$\epsilon' < \min\{ \epsilon (\alpha-1)/4\alpha^2, \epsilon/4\}$.  Since $\delta(H_i) \leq (\pi(F) -
\epsilon/2) \binom{i}{k-1}$, Lemma~\ref{lem:smalldegree} implies that there
is a coordinate $1 \leq u \leq i$ such that
$w_u < \frac{1 - \epsilon'}{i^{1/\alpha}}$. We set $H_{i-1} = H_i - u$.
By Lemma \ref{lem:deletevertex} we have
\[ \lambda_{\alpha}(H_{i-1}) \ge 
  (1 - (1-1/\alpha)k (1-\epsilon'/2) i^{-1}) \lambda_{\alpha}(H_i),\]
and if $C_i \le k^{-1}\delta i^{k(\alpha-1)/\alpha}$ then 
\begin{equation} \label{increase} \lambda_{\alpha}(H_{i-1}) \geq \mu_{i-1} + C_i + \delta i^{k(\alpha-1)/\alpha - 1}.\end{equation}
We claim that this process terminates at some $H_t$ with $t>N$.
Suppose towards a contradiction that the sequence reaches $H_{N}$.
First, there must be some $m>n_1$ such that $C_m \ge k^{-1}\delta m^{k(\alpha-1)/\alpha}$, 
otherwise we would have the contradiction
\[ \lambda_{\alpha}(H_{n_1}) \ge \mu_n + \sum_{i=n_1}^n \delta i^{k(\alpha-1)/\alpha - 1} 
> \delta \sum_{i=n_1}^n 1/i > \delta \log (n_0/n_1) = n_1^k.\]
Now for this $m > n_1$ with $C_m \geq k^{-1}\delta m^{k(\alpha-1)/\alpha}$, we have that
\begin{align*}
  \lambda_{\alpha}(H_{N}) 
  & \ge \lambda_{\alpha}(H_m) \prod_{i=N+1}^m (1 - (1-1/\alpha)k (1-\epsilon'/2) i^{-1}) \\
  & \ge C_m \exp \left( - \sum_{i=N+1}^m  \Big( (1-1/\alpha)k (1-\epsilon'/2) i^{-1} + (k/i)^2 \Big)
  \right) \\ 
  & \ge k^{-1}\delta m^{k(\alpha-1)/\alpha} 
  \exp \Big(-  (1-1/\alpha)k (1-\epsilon'/2) \log (m/N) - k^2 \Big) \\
  &= k^{-1}\delta m^{k(\alpha-1)/\alpha} \left(\frac{m}{N}\right)^{-(1-1/\alpha)k (1-\epsilon'/2)} e^{-k^2} \\
  & > k^{-1}\delta m^{k(\alpha-1)/\alpha} m^{-(1-1/\alpha)k (1-\epsilon'/2)} e^{-k^2}. \\
  \intertext{Now use that $m > n_1$ so that the above inequality continues as}
  & \ge k^{-1}\delta n_1^{(1-1/\alpha)k \epsilon'/2} e^{-k^2} = N^k.
\end{align*}

It is impossible for the eigenvalue of any $N$-vertex $k$-uniform hypergraph to be at least $N^k$, so this
contradiction shows that the process terminates at some $H_t$ with $t>N$.  By construction,
$\delta(H_t) > (\pi(\mathcal{F}) - \epsilon/2)\binom{t}{k-1} > (1-\epsilon)
\text{ex}_1(t,\mathcal{F})$.  Since $\mathcal{G}_t$ is $(\mathcal{F},t,1,1-\epsilon)$-universal,
there is $G_t \in \mathcal{G}_t$ such that $H_t \subseteq G_t$.  However, $\lambda_{\alpha}(H_t)
\ge \mu_t \ge \lambda_{\alpha}(G_t)$, so equality must hold. If $t<n$, then (\ref{increase}) contradicts this and therefore  $t=n$
and $H = H_n = G_n \in \mathcal{G}_n$.
\end{proof} 

\subsection{Proof of  Theorem \ref{thm:strongstab}}

In this subsection we prove Theorem \ref{thm:strongstab}.
We start with the proof of Lemma \ref{lem:edgebound}.

\begin{proof}[Proof of Lemma \ref{lem:edgebound}] 
By the power mean inequality we have
$$\tau(x, \ldots, x)=k! \sum_{\{i_1, \ldots, i_k\} \in E(G)}x_{i_1}\cdots x_{i_k} \le k! e \left( \frac{1}{e} \sum_{\{i_1, \ldots, i_k\} \in E(G)}x_{i_1}^{\alpha}\cdots x_{i_k}^{\alpha}\right)^{1/\alpha}.$$
Also, by Maclaurin's inequality we have
\[ \binom{n}{k}^{-1} \sum_{\{i_1, \ldots, i_k\} \in E(G)}x_{i_1}^{\alpha}\cdots x_{i_k}^{\alpha}
\le \binom{n}{k}^{-1} \sum_{\{i_1, \ldots, i_k\} \subseteq V(G)}x_{i_1}^{\alpha}\cdots x_{i_k}^{\alpha}
\le \left( n^{-1} \sum_{i \in V(G)} x_i^\alpha \right)^k = n^{-k}.\]
Consequently,
$$\tau(x, \ldots, x) \le k!e^{1-1/\alpha} \left(\frac{ {n \choose k} }{ n^k} \right) ^{1/\alpha} \le (k!e)^{1-1/\alpha}$$
and the proof is complete. \end{proof} 

\begin{proof}[Proof of Theorem \ref{thm:strongstab}] 
Suppose that $H$ is a $k$-uniform, $\mathcal{F}$-free hypergraph on $n$ vertices
with $\lambda_{\alpha}(H) > L := (ck!\text{ex}(n,\mathcal{F}))^{(\alpha-1)/\alpha}$.
By Lemma \ref{lem:edgebound}, the number of ordered edges in $H$
is at least $\lambda_{\alpha}(H)^{\alpha/(\alpha-1)} > ck!\text{ex}(n,\mathcal{F})$.
Since $\mathcal{G}_n$ is $(\mathcal{F},n,0,c)$-universal,
$H \subseteq G$ for some $G \in \mathcal{G}_n$.
This proves the second statement of the theorem.
Since $L < \lambda_{\alpha}(\mathcal{G}_n)$ by the bound on $c$,
we also have the first statement.
\end{proof} 

\section{Applications} 
\label{sec:apps}

In this section we apply the general results of the previous section 
to obtain spectral versions of various Tur\'an type problems for graphs and hypergraphs.
We start by showing how to exploit symmetries of a hypergraph when computing 
its $\alpha$-spectral radius (see \cite{HK} for the same argument in the case $\alpha=1$).

\begin{lemma} \label{lem:transposition}
Let $\alpha \geq 1$ and $\mathcal{F}$ be a $k$-uniform hypergraph on $[n]$. 
Suppose that the transposition $(ij)$ is an automorphism of $\mathcal{F}$. 
Consider any $\vec{w} \in \mathbb{R}^n$ with $\|\vec{w}\|_{\alpha} = 1$ 
and $w_t \ge 0$ for all $1 \le t \le n$. 
Define $\vec{w}'$ by $w'_i=w'_j=((w_i^\alpha+w_j^\alpha)/2)^{1/\alpha}$
and $w'_t=w_t$ for $t \in [n] \setminus \{i,j\}$.
Then $\tau_H(\vec{w}',\dots,\vec{w}') \ge \tau_H(\vec{w},\dots,\vec{w})$.
\end{lemma}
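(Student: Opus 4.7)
The plan is a direct symmetrization argument. First I observe that the modified vector $\vec{w}'$ still has $\|\vec{w}'\|_\alpha = 1$, since $(w'_i)^\alpha + (w'_j)^\alpha = w_i^\alpha + w_j^\alpha$ by construction and the remaining coordinates are untouched. So the claim really is that a pure rearrangement that equalises the $\alpha$th powers in the $i,j$ coordinates can only increase the objective.

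Next I would decompose $\tau_H$ according to how each edge meets the pair $\{i,j\}$. Letting $\vec{x}$ denote a vector that agrees with $\vec{w}$ on $[n]\setminus\{i,j\}$ and only varies in the two coordinates $x_i, x_j$, the formula $\tau_H(\vec{x},\ldots,\vec{x}) = k!\sum_{e\in E(H)} \prod_{v\in e} x_v$ gives
\[ \tfrac{1}{k!}\tau_H(\vec{x},\ldots,\vec{x}) \;=\; A \;+\; B\,x_i \;+\; C\,x_j \;+\; D\,x_i x_j, \]
where the coefficients $A,B,C,D$ are polynomials in the fixed values $w_v$ ($v\notin\{i,j\}$), arising respectively from edges avoiding $\{i,j\}$, edges meeting $\{i,j\}$ only in $i$, only in $j$, and in both. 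Because $(ij)$ is an automorphism of $H$ fixing every other vertex, it provides a bijection between edges containing only $i$ and edges containing only $j$ which preserves the monomial in the remaining $w_v$'s; hence $B=C$. Since $w_v\ge 0$ for all $v$, the coefficients $A$, $B=C$, $D$ are nonnegative.

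It therefore suffices to show that, subject to the constraint $x_i^\alpha + x_j^\alpha = s$ with $s := w_i^\alpha + w_j^\alpha$, both $x_i + x_j$ and $x_i x_j$ are maximised at the symmetric point $x_i = x_j = (s/2)^{1/\alpha}$. Setting $u = x_i^\alpha$ and $v = x_j^\alpha$ with $u+v=s$ and $u,v\ge 0$, the product equals $(uv)^{1/\alpha}$, which is maximised at $u=v=s/2$ by AM--GM; and the sum equals $u^{1/\alpha}+v^{1/\alpha}$, which is maximised at $u=v=s/2$ by concavity of $t\mapsto t^{1/\alpha}$ for $\alpha\ge 1$ (Jensen). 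Substituting $(w_i,w_j)$ and $(w'_i,w'_j)$ into the decomposition above then yields $\tau_H(\vec{w}',\ldots,\vec{w}') \ge \tau_H(\vec{w},\ldots,\vec{w})$.

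There is no serious obstacle; the only slightly delicate point is the identification $B=C$, which is precisely where the automorphism hypothesis enters, together with the requirement $w_v\ge 0$ which is needed to ensure the coefficients are nonnegative so that replacing $(x_i+x_j)$ and $x_i x_j$ by larger quantities actually increases the sum.
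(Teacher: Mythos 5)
Your proof is correct and follows essentially the same route as the paper: decompose $\tau_H$ according to how each edge meets $\{i,j\}$, invoke the transposition automorphism to identify the ``only $i$'' and ``only $j$'' contributions (your $B=C$), and then observe that both $x_i+x_j$ and $x_ix_j$ increase under symmetrization of the $\alpha$th powers — the paper phrases these as power-mean inequalities ($w'_i+w'_j\ge w_i+w_j$ and $w'_iw'_j\ge w_iw_j$), you phrase them as Jensen/concavity and AM--GM, but the content is identical. You are slightly more explicit about the nonnegativity of the coefficients, which the paper leaves implicit; otherwise the arguments match.
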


\begin{proof} 
Expanding the definition of
$\tau_H$, we have that
\begin{align*}
  \tau_{\mathcal{F}}(\vec{w}',\dots,\vec{w}') - \tau_{\mathcal{F}}(\vec{w},\dots,\vec{w})
  & = k!\sum_{\stackrel{F \in E(\mathcal{F})}{i \in F, j \notin F}} (w'_i - w_i) 
      \prod_{t \in F \setminus \{i\}} w_t \\
  &+   k!\sum_{\stackrel{F \in E(\mathcal{F})}{i \notin F, j \in F}} (w'_j - w_j) 
      \prod_{t \in F \setminus \{j\}} w_t \\
  &+ k!\sum_{\stackrel{F \in E(\mathcal{F})}{i \in F, j \in F}} 
       \left( w'_iw'_j - w_i w_j\right) \prod_{t \in F \setminus \{i,j\}} w_t.
\end{align*}
Since $(ij)$ is an automorphism of $\mathcal{F}$, the link graphs of $i$ and $j$ are identical so
that
\begin{align*}
  \tau_{\mathcal{F}}(\vec{w}',\dots,\vec{w}') - \tau_{\mathcal{F}}(\vec{w},\dots,\vec{w}) 
  &= k!(w'_i + w'_j - w_i - w_j) \sum_{\stackrel{A \in \binom{V(\mathcal{F})}{k-1}}{A \cup \{i\} \in
  E(\mathcal{F})}} \prod_{t \in A} w_t \\
  &+ k!(w'_iw'_j - w_iw_j) 
  \sum_{\stackrel{F \in E(\mathcal{F})}{i \in F, j \in F}} \prod_{t \in F \setminus \{i,j\}} w_t.
\end{align*}
Since $\alpha \geq 1$, by the inequality of power means we have $w'_i=w'_j \ge \sqrt{w_iw_j}$ and $w'_i + w'_j = 
2w'_i \geq w_i + w_j$, so that both of the terms in the above expression are non-negative,
completing the proof.
\end{proof} 

\begin{cor} \label{cor:sym}
Let $\alpha \geq 1$ and $\mathcal{F}$ be a $k$-uniform hypergraph on $[n]$. 
Let $\mathcal{P}_{\mathcal{F}}$ be the partition of $[n]$ into equivalence classes 
of the relation in which $i \sim j$ iff $(ij)$ is an automorphism of $\mathcal{F}$. 
Then there is $\vec{w} \in \mathbb{R}^n$ with $\|\vec{w}\|_{\alpha} = 1$,
and $w_t \ge 0$ for all $1 \le t \le n$ such that
$\tau_H(\vec{w},\dots,\vec{w}) = \lambda_{\alpha}(H)$
and $w$ is constant on each part of $\mathcal{P}_{\mathcal{F}}$.
\end{cor}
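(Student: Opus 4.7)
The plan is to combine the averaging step of Lemma \ref{lem:transposition} with an extremal argument over an auxiliary strictly convex potential, which lets me extract a symmetric optimizer in one shot rather than by iterated transpositions.

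First I would observe that the maximum defining $\lambda_{\alpha}(\mathcal{F})$ is attained on the compact set $\{\vec{w} : \|\vec{w}\|_{\alpha} = 1\}$, and that without loss of generality such a maximizer may be assumed entrywise non-negative: replacing $\vec{w}$ by $(|w_1|,\dots,|w_n|)$ preserves $\|\cdot\|_{\alpha}$ and cannot decrease $\tau_{\mathcal{F}}(\vec{w},\dots,\vec{w})$, since the expansion of $\tau_{\mathcal{F}}$ in the standard basis has non-negative coefficients.

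Let $\mathcal{M}$ denote the (non-empty, compact) set of non-negative optimizers, and choose $\vec{w}^{*} \in \mathcal{M}$ minimizing the continuous functional
\[\Phi(\vec{w}) = \sum_{t=1}^{n} w_{t}^{2\alpha}.\]
I claim $\vec{w}^{*}$ is constant on each part of $\mathcal{P}_{\mathcal{F}}$. Suppose for contradiction that $w_{i}^{*} \neq w_{j}^{*}$ for some pair $i \sim j$. Applying Lemma \ref{lem:transposition} produces $\vec{w}'$ with $\|\vec{w}'\|_{\alpha} = 1$, $\vec{w}' \geq 0$, and $\tau_{\mathcal{F}}(\vec{w}',\dots,\vec{w}') \geq \lambda_{\alpha}(\mathcal{F})$; the reverse inequality being automatic, $\vec{w}' \in \mathcal{M}$. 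However, by strict convexity of $x \mapsto x^{2}$,
\[(w'_{i})^{2\alpha} + (w'_{j})^{2\alpha} = \tfrac{1}{2}\bigl((w_{i}^{*})^{\alpha} + (w_{j}^{*})^{\alpha}\bigr)^{2} < (w_{i}^{*})^{2\alpha} + (w_{j}^{*})^{2\alpha},\]
while all other coordinates agree; hence $\Phi(\vec{w}') < \Phi(\vec{w}^{*})$, contradicting minimality.

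The main subtlety, and the reason I favour this extremal formulation over a direct iteration of Lemma \ref{lem:transposition}, is that averaging over $\{i,j\}$ and then over $\{i,k\}$ typically leaves $w'_{j} \neq w'_{k}$, so a naive greedy procedure need not terminate or converge to a vector constant on each equivalence class. Passing to the minimizer of a strictly convex potential that is strictly decreased by the averaging operation exactly when the two averaged coordinates differ bypasses this obstacle in a single compactness step.
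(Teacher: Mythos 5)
Your proof is correct and follows the same overall strategy as the paper: restrict attention to the compact set of non-negative optimisers, minimise a continuous auxiliary potential over that set, and use Lemma~\ref{lem:transposition} to derive a contradiction from a strict decrease of the potential whenever some part of $\mathcal{P}_{\mathcal{F}}$ carries non-constant weights. The one real difference is the choice of potential. The paper takes $S(\vec{w}) = \sum_{P}\sum_{i\in P}|w_i-\bar{w}_P|$, where $\bar{w}_P$ is the $\alpha$-power mean over the class $P$ (chosen precisely so that $\bar{w}_P$ is unchanged by the averaging operation), whereas you take $\Phi(\vec{w})=\sum_t w_t^{2\alpha}$. Your choice is cleaner to justify: the averaging replaces the pair $(w_i^{\alpha},w_j^{\alpha})$ by their common arithmetic mean while fixing $w_i^{\alpha}+w_j^{\alpha}$ and all other coordinates, so strict convexity of $x\mapsto x^2$ immediately gives $\Phi(\vec{w}')<\Phi(\vec{w})$ when $w_i\neq w_j$. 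Verifying the analogous strict decrease of the paper's $S$ is more delicate---one must pick $i,j$ on opposite sides of $\bar{w}_P$, and even then the concavity of $t\mapsto t^{1/\alpha}$ for $\alpha\ge 1$ works against the inequality $2|((w_i^{\alpha}+w_j^{\alpha})/2)^{1/\alpha}-\bar{w}_P|<|w_i-\bar{w}_P|+|w_j-\bar{w}_P|$---so your potential makes the key step transparent where the paper's is terse. Your closing remark explaining why a naive greedy iteration of Lemma~\ref{lem:transposition} need not terminate is a valid concern, and both you and the paper sidestep it by the single compactness step.
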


\begin{proof} 
Given $\vec{w} \in \mathbb{R}^n$ and $P \in \mathcal{P}_{\mathcal{F}}$, let 
\[\bar{w}_P = \left( |P|^{-1} \sum_{i \in P} w_i^\alpha \right)^{1/\alpha}.\] 
Consider $\vec{w}$ that minimises 
\[S(\vec{w}) = \sum_{P \in \mathcal{P}_{\mathcal{F}}} \sum_{i \in P} |w_i - \bar{w}_P|.\] 
We claim $S(\vec{w})=0$, i.e.\ $\vec{w}$ is constant on each part of $\mathcal{P}_{\mathcal{F}}$. 
For suppose not, and consider some $P \in \mathcal{P}_{\mathcal{F}}$ and $i,j$ in $P$ with $w_i \ne w_j$. 
Define $\vec{w}'$ as in Lemma \ref{lem:transposition}; 
then $\|\vec{w}'\|_{\alpha} = 1$, $\tau_H(\vec{w}',\dots,\vec{w}') = \lambda_{\alpha}(H)$
and $S(\vec{w}')<S(\vec{w})$, contradicting the choice of $\vec{w}$.
\end{proof} 

Next we estimate the $\alpha$-spectral radii for the examples that we will consider,
namely the star, the balanced bipartite $3$-graph, and the Tur\'an graph.

\begin{lemma} \label{lem:radii} Let $\alpha>1$.
\begin{enumerate}[(i)]
\item $\lambda_{\alpha}(S^k_{t,n}) = k! \binom{n-t}{k-t} 
k^{-k/\alpha} \left( \tfrac{k-t}{n-t} \right)^{(k-t)/\alpha}$.
\item $\lambda_{\alpha}(B_n) = (1+O(n^{-2})) 6e(B_n) n^{-3/\alpha}$.
\item $\lambda_{\alpha}(T_{r,n}) = (1+O(n^{-2})) 2e(T_{r,n}) n^{-2/\alpha}$.
\end{enumerate}
\end{lemma}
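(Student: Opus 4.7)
The plan is to apply Corollary~\ref{cor:sym} in each of the three cases, reducing the search for a maximiser of $\tau_H$ to nonnegative vectors $w$ that are constant on each orbit of $\mathrm{Aut}(H)$. This converts each $\alpha$-spectral radius computation into a finite-dimensional constrained optimisation.

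For (i), the orbits of $\mathrm{Aut}(S^k_{t,n}) = S_t\times S_{n-t}$ are the centre $C$ and its complement, so the optimal $w$ takes some value $a$ on $C$ and $b$ off $C$, with $ta^\alpha+(n-t)b^\alpha=1$. Since every edge is the union of $C$ with a $(k-t)$-subset of the complement, $\tau_{S^k_{t,n}}(w,\ldots,w) = k!\binom{n-t}{k-t}\,a^t b^{k-t}$. Changing variables to $u = ta^\alpha$ and $v = (n-t)b^\alpha$ (so $u+v=1$), weighted AM--GM with weights $t/k$ and $(k-t)/k$ pins the maximum of $u^{t/\alpha}v^{(k-t)/\alpha}$ at $u=t/k$, $v=(k-t)/k$. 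This gives $a=k^{-1/\alpha}$ and $b=((k-t)/(k(n-t)))^{1/\alpha}$, and substitution yields the claimed exact formula.

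For (ii) and (iii), the easy lower bound $\lambda_\alpha(H)\geq k!\,e(H)\,n^{-k/\alpha}$ comes from evaluating $\tau_H$ on the vector whose coordinates are all $n^{-1/\alpha}$; note that $k!\,e(H)\,n^{-k/\alpha}$ already matches the stated main term. When the parts are exactly balanced ($2\mid n$ for $B_n$, or $r\mid n$ for $T_{r,n}$), $\mathrm{Aut}(H)$ is transitive on $V(H)$, so Corollary~\ref{cor:sym} forces this uniform vector to be optimal and there is no error term. Otherwise Corollary~\ref{cor:sym} only constrains the optimiser to be constant on each part, with at most two distinct values since distinct part sizes differ by at most one. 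Writing $\tau_H$ explicitly---$\tau_{B_n}(w,w,w)=3n_1n_2ab[(n_1-1)a+(n_2-1)b]$ subject to $n_1a^\alpha+n_2b^\alpha=1$ for $B_n$, and $\tau_{T_{r,n}}(w,w)=(\sum_i s_ia_i)^2-\sum_i s_i^2a_i^2$ subject to $\sum_i s_i a_i^\alpha = 1$ for $T_{r,n}$---leaves an essentially two-variable optimisation.

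The main obstacle is bounding the unbalanced case to within a $(1+O(n^{-2}))$ factor of the balanced estimate. My plan is to parametrise the optimal values as $a_i=n^{-1/\alpha}(1+\eta_i)$ and read off from the Lagrange (KKT) conditions, combined with the normalisation constraint, that $|\eta_i|=O(n^{-1})$; this should be possible because moving one vertex between parts perturbs both the objective and the normalisation by only $O(1/n)$-sized terms, so the KKT equations for distinct parts agree up to $O(1/n)$. Expanding $\tau_H$ and the constraint to second order in $\eta_i$ via Bernoulli's inequality, the first-order corrections cancel after invoking the normalisation, leaving a residual of relative size $O(n^{-2})$. Combining this upper bound with the uniform lower bound then delivers the claimed estimate.
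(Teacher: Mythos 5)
Your part (i) is essentially the paper's proof: reduce to a two-parameter problem via Corollary~\ref{cor:sym}, and optimize $a^t(1-a)^{k-t}$ (the paper does it by calculus, you by weighted AM--GM; same thing). Likewise the balanced subcase of (ii)--(iii) matches the paper. For the unbalanced subcases, though, you take a genuinely different route. The paper parametrises the constraint surface by a single scalar $a$ so that $\|w\|_\alpha=1$ holds identically, shows by a sign analysis that $f'(a)\ge 0$ for $a\le a_0$ and $f'(a)\le 0$ for $a\ge a_1$ with $a_0,a_1=O(1/n)$, then uses the Mean Value Theorem $f(b)=f(0)+bf'(a)$ and bounds $|bf'(a)|$ directly (exploiting that the two leading terms of $f'$ nearly cancel). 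You instead propose KKT to locate the maximiser within $O(1/n)$ of the uniform vector, and then a Taylor expansion; this is a cleaner conceptual frame and avoids guessing the right parametrisation, but it trades one piece of hands-on analysis for another.

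Two steps of your sketch need to be firmed up. First, the claim $|\eta_i|=O(n^{-1})$ is exactly the paper's ``$|b|=O(1/n)$'' and is the crux; your heuristic (degrees of vertices in parts of sizes $q$ and $q+1$ differ by a relative $O(1/n)$) is the right idea, but to turn it into a proof you must actually write the stationarity equations $\lambda_\alpha a_P^{\alpha-1}=\tau_H(e_v,w,\dots,w)$ (Lemma~\ref{lem:neighborsum}) for $v$ in two parts $P,Q$, subtract, and observe that the left side is of order $\lambda_\alpha n^{-(\alpha-1)/\alpha}(\eta_P-\eta_Q)$ while the right side contributes a forcing term of order $n^{1-1/\alpha}\cdot O(1/n)$; dividing requires knowing $\lambda_\alpha=\Theta(n^{k-k/\alpha})$, which you get from the uniform lower bound together with Lemma~\ref{lem:edgebound} (so there is no circularity, but this should be said). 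You also need to rule out the boundary of the nonnegativity constraints, i.e. show the maximiser has all $a_i>0$, before KKT stationarity applies. Second, ``the first-order corrections cancel after invoking the normalisation'' is imprecise and slightly overstates what happens: the first-order term $\nabla f|_0\cdot\eta$ does not vanish, but it is $O(n^{-2})$-relative, and getting this requires simultaneously (a) $\sum_i s_i\eta_i=O(1/n)$ from the expanded constraint, (b) $|s_i-s_j|\le1$ because the parts are balanced, and (c) $|\eta_i|=O(1/n)$; if you drop any of (a)--(c) the bound degrades to $O(1/n)$-relative, which is not good enough. With these points spelled out your route would go through and would be a legitimate alternative to the paper's derivative-sign-plus-MVT argument.
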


\begin{proof} 
For (i), note that for every pair of vertices in the center there is an automorphism
interchanging them, and the same is true for every pair of vertices not in the center.
By Corollary \ref{cor:sym} we can choose $\vec{w}$ with
$\lambda_{\alpha}(S^k_{t,n}) = \tau_H(\vec{w},\dots,\vec{w})$
such that for some $0 \le a \le 1$
we have $w_i = \left( \tfrac{a}{t} \right)^{1/\alpha}$ for $i$ in the centre
and $w_i = \left( \tfrac{1-a}{n-t} \right)^{1/\alpha}$ for $i$ not in the centre.
Then 
\[ \tau_H(\vec{w},\dots,\vec{w}) = k!\binom{n-t}{k-t}
\left( \frac{a}{t} \right)^{t/\alpha} \left( \frac{1-a}{n-t} \right)^{(k-t)/\alpha}.\]
By differentiating with respect to $a$, 
we see that $a^{t/\alpha} (1-a)^{(k-t)/\alpha}$ is maximised when
$t(1-a)/\alpha = (k-t)a/\alpha$, i.e.\ $a=t/k$.
This gives the stated formula.

Next we note a straightforward argument for the cases that $2 \mid n$ in (ii) or $r \mid n$ in (iii). 
In these cases, for every pair of vertices there is an automorphism interchanging them, 
so by Corollary \ref{cor:sym}, defining $\vec{w}$ by $w_i = n^{-1/\alpha}$ for all $i$ we have
\[\lambda_{\alpha}(H) = \tau_H(\vec{w},\dots,\vec{w}) = k! e(H) n^{-k/\alpha},\]
where $H=B_n$ and $k=3$ in (ii), or $H=T_{r,n}$ and $k=2$ in (iii).

Next consider (ii) when $n=2t+1$ is odd.
Let $V_1$ be the part of size $t$ and $V_2$ the part of size $t+1$.
By Corollary \ref{cor:sym} we can choose $\vec{w}$ with
$\lambda_{\alpha}(B_{2t+1}) = \tau_H(\vec{w},\vec{w},\vec{w})$
such that $w_i=x^{1/\alpha}$ for $i \in V_1$ and $w_i=y^{1/\alpha}$ for $i \in V_2$,
where $x,y \ge 0$ and $\|w\|_\alpha^\alpha=tx+(t+1)y=1$. Then
\begin{align*}
 \tau_{B_{2t+1}}(\vec{w},\vec{w},\vec{w}) 
  &= 6 (t+1)\binom{t}{2} x^{2/\alpha} y^{1/\alpha}
  + 6 t\binom{t+1}{2} x^{1/\alpha} y^{2/\alpha} \\
  &= 3 t(t+1) (xy)^{1/\alpha}  \Big( (t-1)x^{1/\alpha} + ty^{1/\alpha} \Big).
\end{align*}
Write \[x = \frac{1+a}{2t+1}, \ y = \frac{1-at/(t+1)}{2t+1}
\text{ and } f(a) = (xy)^{1/\alpha} ((t-1)x^{1/\alpha} + ty^{1/\alpha})\]
for some $a \in (-1,1+1/t)$. Note that for all $a$, $\left\lVert \vec{w} \right\rVert_{\alpha}^{\alpha} 
= tx + (t+1)y = 1$ so that the maximum of $\tau_{B_{2t+1}}(\vec{w},\cdots,\vec{w})$, i.e.\
$\lambda_{\alpha}(B_{2t+1})$, will be achieved by the $a$ which maximizes $f(a)$. First, when $a =
0$ then $x = y$ so $f(0) = (2t-1)(2t+1)^{-3/\alpha}$.  Next we calculate $\frac{dx}{da} =
(2t+1)^{-1}$ and $\frac{dy}{da} = \frac{-t}{(t+1)(2t+1)}$ and
\[ f'(a) = \frac{(xy)^{1/\alpha}}{(2t+1)\alpha} 
   \Big( A + B ((t-1)x^{1/\alpha} + ty^{1/\alpha}) \Big), \]
where
\[ A = (t-1)x^{-1+1/\alpha} - \tfrac{t^2}{t+1}y^{-1+1/\alpha}
   \text{ and } B = x^{-1}-\tfrac{t}{t+1}y^{-1}. \]
We will show that there are $a_0=-1/2t^2 + O(1/t^3)$ and $a_1=1/2t$
such that $f'(a) \ge 0$ for $a \le a_0$ and $f'(a) \le 0$ for $a \ge a_1$.
First we note that $B \ge 0 \Leftrightarrow 1+1/t \ge g(a)$, where
\[ g(a) = x/y = \frac{1+a}{1-at/(t+1)}.\]
Since $g(a)$ is an increasing function of $a$ on $(-1,1+1/t)$,
we have $B \ge 0 \Leftrightarrow a \le a_1$, where $a_1 \in (-1,1+1/t)$ 
is the unique value with $g(a_1)=1+1/t$, namely $a_1=1/2t$.
Next we note that $A \ge 0 \Leftrightarrow 1-1/t^2 \ge g(a)^{1-1/\alpha} \Leftrightarrow a \le a_0$,
where $a_0 \in (-1,1+1/t)$ is the unique value with 
$g(a_0)^{1-1/\alpha}=1-1/t^2$.
Since $g(a)^{1-1/\alpha} = 1 + \tfrac{2t+1}{t+1} a + O(a^2)$
we have $a_0 = -1/2t^2 + O(1/t^3)$.
Thus $a_0$ and $a_1$ have the required properties.
It follows that $f(a)$ is maximised at some $b \in [a_0,a_1]$.
By the Mean Value Theorem we have $f(b) = f(0) + bf'(a)$
for some $a$ between $0$ and $b$. 
Since $|a| \le |b| = O(1/t)$, each of the two terms of $A$ are $t^{2-1/\alpha} + O(t^{1-1/\alpha})$
so that $A = O(t^{1-1/\alpha})$.  Similarly, $B = O(1)$,
so $bf'(a) = O(t^{-1-3/\alpha}) = O(t^{-2}) f(0)$.
Therefore \[\lambda_{\alpha}(B_{2t+1}) = 3 t(t+1) f(b)
= (1+O(t^{-2})) 6e(B_{2t+1}) (2t+1)^{-3/\alpha}.\]

It remains to consider the general case of (iii).
Write $n=qr+s$ with $0 \le s < r$.
Then $T_{r,n}$ has $s$ parts of size $q+1$ and $r-s$ parts of size $q$.
By Corollary \ref{cor:sym} we can choose $\vec{w}$ with
$\lambda_{\alpha}(T_{r,n}) = \tau_H(\vec{w},\vec{w})$
such that $w_i=x^{1/\alpha}$ for $i$ in a part of size $q+1$
and $w_i=y^{1/\alpha}$ for $i$ in a part of size $q$,
where $x,y \ge 0$ and $\|w\|_\alpha^\alpha=(q+1)sx+q(r-s)y=1$. Then
\[ \tau_{T_{r,n}}(\vec{w},\vec{w}) 
= (q+1)^2 s(s-1) x^{2/\alpha} + q^2 (r-s)(r-s-1) y^{2/\alpha}
  + 2q(q+1) s(r-s) (xy)^{1/\alpha} . \]
Write \[x = \frac{1+a}{qr+s}, \ y = \frac{1-a(q+1)s/q(r-s)}{qr+s}
\text{ and } f(a) = \tau_{T_{r,n}}(\vec{w},\vec{w})\]
for some $a \in (-1,q(r-s)/(q+1)s)$. Note that $f(0) = 2e(T_{r,n}) n^{-2/\alpha}$.
Next we calculate
\[ f'(a) = \frac{2(q+1)s}{(qr+s)\alpha} 
   \Big( Cx^{1/\alpha-1} - Dy^{1/\alpha-1} \Big), \]
where
\[ C = (q+1)(s-1) x^{1/\alpha} + q(r-s) y^{1/\alpha} \text{ and } 
   D = (q+1)s x^{1/\alpha} + q(r-s-1) y^{1/\alpha}. \]
Note that $D = C + E$, where $E = (q+1) x^{1/\alpha} - q y^{1/\alpha}$.
If $a \geq 0$, then $x \ge y$ which implies that $f'(a) \ge 0$.
On the other hand, if $a$ is such that $E \leq 0$, then $x \leq y$ so that $f'(a) \leq 0$.
We have that $E \le 0 \Leftrightarrow a \le a_0$,
where $a_0$ is the unique value such that
$(1+1/q)^{\alpha} = y/x = \tfrac{1-a_0(q+1)s/q(r-s)}{1+a_0}$, i.e.
\[ a_0 = - \frac{(r-s)q((1+1/q)^{\alpha} -1)}{s(q+1) + (r-s)q (1+1/q)^{\alpha}} .\] 
It follows that $f(a)$ is maximised at some $b \in [a_0,0]$.
By the Mean Value Theorem we have $f(b) = f(0) + bf'(a)$
for some $a$ between $0$ and $b$. 
Since $|a| \le |b| = O(1/q)$,
both $Cx^{1/\alpha-1}$ and $Dy^{1/\alpha-1}$
are $(1+O(1/q)) q(r-1) n^{1-2/\alpha}$,
so $bf'(a) = O(n^{-2/\alpha}) = O(n^{-2}) f(0)$. Therefore 
\[\lambda_{\alpha}(T_{r,n}) = f(b) = (1+O(n^{-2})) 2e(T_{r,n}) n^{-2/\alpha}.\]
\end{proof} 

\begin{proof}[Proof of Corollary \ref{cor:colourcritical}] 
Let $\mathcal{G}_n$ be the set of complete $r$-partite graphs on $n$ vertices. Erd\H{o}s and Simonovits \cite{ES}
showed that there are $\epsilon>0$ and $N>1$
such that for all $n>N$, every $F$-free graph with minimum degree at least $(1-1/r-\epsilon)n$ is $r$-partite. Consequently,
 $\mathcal{G}_n$ is $(F,n,1,1-\epsilon)$-universal for all $n \geq N$.
We will apply Theorem \ref{thm:density} with $\mathcal{F}=\{F\}$.
Let $\delta$ be given as in that theorem. 
We can assume that $N$ is sufficiently large.
We have $\text{ex}(n,F) = e(T_{r,n}) = \tfrac{r-1}{2r}n^2 + O(1)$
and $\pi(F) = \tfrac{r-1}{r}$, so 
\[ | \text{ex}(n,F) - \text{ex}(n-1,F) - \pi(F)n | = O(1) < \delta n,\]
if $n \ge N$ and $N$ is sufficiently large.
Also, by Lemma \ref{lem:radii}(iii) we have
$\lambda_{\alpha}(T_{r,n}) = (1+O(n^{-2})) 2e(T_{r,n}) n^{-2/\alpha}$, so
\[|\lambda_{\alpha}(T_{r,n}) - 2\text{ex}(n,F) n^{-2/\alpha}| 
\leq \delta n^{2(\alpha-1)/\alpha - 1},\]
if $n \ge N$ and $N$ is sufficiently large.  Finally, we must argue that if $G$ is an $r$-partite graph on $n$ vertices, then $\lambda_{\alpha}(G) \le \lambda_{\alpha}(T_{r,n})$. Surprisingly, this fact is not entirely trivial, however, it has recently been proved (even more generally for $r$-partite $k$-uniform hypergraphs) in \cite{KNY}.
Thus Theorem \ref{thm:density} implies the Corollary.
\end{proof} 

\begin{proof}[Proof of Corollary \ref{cor:fano}] 
Let $\mathcal{G}_n$ be the set of 2-colorable 3-uniform hypergraphs on $n$ vertices. F\"uredi and Simonovits \cite{FS}
showed that there are $\epsilon>0$ and $N>1$
such that $\mathcal{G}_n$ is $(Fano,n,1,1-\epsilon)$-universal for all $n \geq N$.
We will apply Theorem \ref{thm:density} with $\mathcal{F}=\{Fano\}$.
We can assume that $N$ is sufficiently large.
We have $\pi(Fano) = \tfrac{3}{4}$ and
$\text{ex}(n,Fano)-\text{ex}(n-1,Fano) = e(B_n)-e(B_{n-1})
= \tfrac{3}{8}n^2 + O(n)$, so 
\[ \left| \text{ex}(n,Fano) - \text{ex}(n-1,Fano) - \pi(Fano) \binom{n}{2} \right| 
  = O(n) < \delta n^2,\]
if $n \ge N$ and $N$ is sufficiently large.
Also, by Lemma \ref{lem:radii}(ii) we have
$\lambda_{\alpha}(B_n) = (1+O(n^{-2})) 6e(B_n) n^{-3/\alpha}$, so
\[|\lambda_{\alpha}(B_n) - 6\text{ex}(n,Fano) n^{-3/\alpha}| 
\leq \delta n^{3(\alpha-1)/\alpha - 1},\]
if $n \ge N$ and $N$ is sufficiently large.
It suffices to show that if $\chi(G)=2$, then 
$\lambda_{\alpha}(G) \le \lambda_{\alpha}(B_n)$, and this follows from a result of \cite{MT} for $\alpha=2$ and from \cite{KNY} for all $\alpha>1$. 
Thus Theorem \ref{thm:density} implies the Corollary.
\end{proof} 

\begin{proof}[Proof of Corollary \ref{cor:kk}] 
Suppose that $e(\partial H) < \tbinom{x}{k-1}$.
By Lov\'asz \cite[Ex 13.31(b)]{Lo} we have $e(H) < \tbinom{x}{k}$.
But then Lemma \ref{lem:edgebound} gives
$\lambda_{\alpha}(H) < \Big( k!\tbinom{x}{k} \Big)^{1-1/\alpha}$, contradiction.
\end{proof} 

\begin{proof}[Proof of Corollary \ref{cor:intersecting}] 
We will apply Theorem~\ref{thm:strongstab} to $\mathcal{F} = \{F_0,\dots,F_{t-1}\}$, where $F_i$ is
the $k$-uniform hypergraph with $2$ edges that intersect in $i$ vertices.  Let $\mathcal{G}_n =
\{S^k_{t,n}\}$.  Erd\H{o}s, Ko and Rado \cite{EKR} showed that
$\text{ex}(n,\mathcal{F})=e(S^k_{t,n})$ for large $n$, and moreover there is some absolute constant
$C$ such that $\mathcal{G}_n$ is an $(\mathcal{F},n,0,C/n)$-universal family.  By Lemma
\ref{lem:radii}(i) we have $\lambda_{\alpha}(\mathcal{G}_n) = \lambda_{\alpha}(S^k_{t,n}) =
\Theta(n^{(k-t)(\alpha-1)/\alpha})$.  We now apply Theorem~\ref{thm:strongstab}.  First, we check
\begin{align*}
  \frac{C}{n} < \frac{\lambda_{\alpha}(\mathcal{G}_n)^{\alpha/(\alpha-1)}}{k!ex(n,\mathcal{F})} =
  \frac{\Theta(n^{k-t})}{k! \binom{n-t}{k-t}} = \Theta(1),
\end{align*}
which is true for large $n$.  Therefore, for large $n$, we can apply Theorem~\ref{thm:strongstab} to
obtain that for any $k$-uniform, $\mathcal{F}$-free hypergraph $H$ on $n$ vertices, we have that
$\lambda_{\alpha}(H) \leq \lambda_{\alpha}(S^k_{t,n})$.  Furthermore, there is a constant $c =
c(k,t)$ such that if $\lambda_{\alpha}(H) > cn^{-(\alpha-1)/\alpha} \lambda_{\alpha}(S^k_{t,n})$
then $\lambda_{\alpha}(H) > (Cn^{-1}k!\text{ex}(n,\mathcal{F}))^{(\alpha-1)/\alpha}$, so that
Theorem~\ref{thm:strongstab} implies that if $\lambda_{\alpha}(H) >
cn^{-(\alpha-1)/\alpha)}\lambda_{\alpha}(S^k_{t,n})$, then $H \subseteq S^k_{t,n}$, i.e.\ $H$ is a
star.
\end{proof} 

\section{Concluding remarks} 
\label{sec:concluding}

%
%

In this paper we have given two general criteria that can be applied to obtain
$\alpha$-spectral versions of a variety of Tur\'an type problems.
We have illustrated some such applications, but we have not attempted to be exhaustive:
there are several other examples which can no doubt be treated by the same method,
although it would be laborious to give the details.
On the other hand, it would be more interesting to obtain $\alpha$-spectral results
for hypergraphs where the spectral extremal example differs from the usual extremal example.
For example, Nikiforov \cite{N2,N4} showed that when $n$ is odd, the maximal spectral radius
of a $C_4$-free graph on $n$ vertices is uniquely achieved by the `friendship graph', which consists
of $(n-1)/2$ triangles intersecting in a single common vertex. This is very different from
the extremal example for the maximum number of edges in a $C_4$-free graph on $n$ vertices:
F\"uredi \cite{Fu} showed that for large $n$ of the form $q^2+q+1$ this is uniquely achieved
by the polarity graph of a projective plane.
 
In all our results we assumed that $\alpha>1$. 
This assumption is necessary, as different behaviour emerges at $\alpha=1$.
In this case, the $\alpha$-spectral radius is the well-studied hypergraph lagrangian.
Consider for example the $3$-uniform hypergraph $F_5 = \{123,124,345\}$.
The Tur\'an problem was solved by Frankl and F\"uredi \cite{FF83}:
they showed that for large $n$ the unique largest $F_5$-free $3$-uniform hypergraph
on $n$ vertices is the balanced complete tripartite $3$-uniform hypergraph $T^3_n$.
A stability result was obtained by Keevash and Mubayi \cite{KM}
(who also improved the bound on $n$) and the optimum bound on $n$
was obtained by Goldwasser \cite{G}. Similar arguments along the lines in this paper 
would no doubt enable one to deduce the $\alpha$-spectral version from Theorem \ref{thm:density}
when $\alpha>1$. However, the corresponding result does not hold when $\alpha=1$.
Indeed, in this case $\lambda_{1}(T^3_n) = 2/9 + o(1)$ for large $n$,
but the $F_5$-free $3$-uniform hypergraph $H$ equal to $K^3_4$ with $n-4$ isolated vertices 
has $\lambda_{1}(H) = 3/8$.

The line of research started in this paper can be viewed as a generalisation of that proposed in
\cite{HK}, namely to determine the maximum lagrangian for any specified property of hypergraphs.
Such questions go back to Frankl and F\"uredi \cite{FF89}, who considered the question of maximising
the lagrangian of an $k$-uniform hypergraph with a specified number of edges. They conjectured that
initial segments of the colexicographic order are extremal. Many cases of this have been proved by
Talbot \cite{T}, but the full conjecture remains open. Here we propose the natural generalisation of
this conjecture, namely that among $k$-uniform hypergraphs with a specified number of edges, initial
segments of the colexicographic order maximise the $\alpha$-spectral radius, for any $\alpha \ge 1$.
Lemma \ref{lem:edgebound} gives an asymptotic form of this conjecture, as for any $k$-uniform 
hypergraph $H$ with $e(H) = (1+o(1))\tbinom{n}{k}$ it implies
\[\lambda_{\alpha}(H) \le (1+o(1)) \Big( k!\tbinom{n}{k} \Big)^{1-1/\alpha} 
= (1+o(1))\lambda_{\alpha}(K^k_n).\]

A final question is whether Corollary \ref{cor:intersecting} can be extended to cover all values of
$n$, possibly with other extremal examples, as in the Complete Intersection Theorem of Ahlswede and
Khachatrian.  We expect that the $\alpha$-spectral result may differ from the extremal
result. Indeed, consider the case $k=2$, $t=1$, $n=4$ and $\alpha=2$. For the extremal problem, the
maximum of $3$ edges is achieved both by the star $K_{1,3}$, and the triangle $K_3$ (plus an
isolated vertex). However, $\lambda_{2}(K_{1,3}) = \sqrt{3} < 2 = \lambda_{2}(K_3)$.

\section{Acknowledgments}
We are very grateful to the referees for their careful reading of the manuscript and suggesting a shorter proof of Lemma 5.


\end{document}